\documentclass[11pt,a4paper]{amsart}

\usepackage{fullpage}
\usepackage{amsmath}
\usepackage{amsfonts}
\usepackage{amssymb}
\usepackage{amsthm}
\usepackage[utf8]{inputenc}
\usepackage{breqn}
\usepackage{afterpage}
\usepackage{longtable}
\usepackage{indentfirst}
\usepackage{caption}
\usepackage{subcaption}
\usepackage{graphicx}
\graphicspath{ {./images/} }
\newtheorem{theorem}{Theorem}[section]
\newtheorem{lemma}[theorem]{Lemma}

\newtheorem{proposition}[theorem]{Proposition}

\theoremstyle{definition}
\newtheorem{definition}[theorem]{Definition}

\newtheorem{theorem-definition}[theorem]{Theorem-Definition}

\theoremstyle{remark}
\newtheorem{remark}[theorem]{Remark}

\numberwithin{equation}{section}

\usepackage{color}
\usepackage{xcolor}

\def\red{\color{red}}

\usepackage{bbold}

\newcommand{\lam}{\lambda}

\newcommand{\C}{\mathbb{C}}
\newcommand{\R}{\mathbb{R}}
\newcommand{\N}{\mathbb{N}}
\newcommand{\Z}{\mathbb{Z}}
\renewcommand{\P}{\mathbb{P}}

\renewcommand{\L}{\mathcal{L}}
\newcommand{\J}{\mathcal{J}}
\newcommand{\F}{\mathcal{F}}
\newcommand{\M}{\mathcal{M}}
\newcommand{\G}{\mathcal{G}}

\DeclareMathOperator{\Card}{Card}

\DeclareMathOperator{\Supp}{Supp}
\DeclareMathOperator{\diam}{diam}

\DeclareMathOperator{\Lip}{Lip}
\DeclareMathOperator{\dist}{dist}

\renewcommand{\epsilon}{\varepsilon}

\begin{document}

\title{Holomorphic motions of weighted periodic points}

\author{Fabrizio Bianchi}
\author{Maxence Br\'evard}

\address{CNRS, Univ. Lille, UMR 8524 - Laboratoire Paul Painleve, F-59000 Lille, France}
\email{fabrizio.bianchi@univ-lille.fr}
\address{Université de Toulouse - IMT, UMR CNRS 5219, 31062 Toulouse Cedex, France}
\email{maxence.brevard@univ-toulouse.fr, mbrevard@hotmail.fr}



\subjclass[2010]{}
\date{}

\keywords{}

\begin{abstract}
We study the holomorphic motions of repelling
periodic points
in stable families of endomorphisms of $\mathbb P^k(\mathbb C)$.
In particular, we establish an asymptotic equidistribution
of the graphs associated to such periodic points with respect to natural measures in the space of all holomorphic motions of points in the Julia sets.
\end{abstract}

\maketitle


\section{Introduction}

A \emph{holomorphic family of endomorphims of $\mathbb P^k$} is a pair $(M,f)$, where $M$ is a complex manifold and $f\colon M\times \mathbb P^k\to M\times \mathbb P^k$ is a holomorphic map of the form $f(\lambda,z)=(\lambda, f_\lambda (z))$, where each $f_\lambda$ is an endomorphism of $\mathbb P^k$ of the same algebraic degree $d$. 
We always assume 
that $M$ is connected and simply connected, and that $d\geq 2$.
The following fundamental result due 
by Lyubich \cite{Ly83a}, Mañ\'e-Sad-Sullivan \cite{MSS83}, and DeMarco \cite{dM03} defines and 
characterizes \emph{stability} within such families when $k=1$, see also
\cite{Lev82,Prz85,Si81} 
for further characterizations and previous results in the polynomial case. Recall that
Freire-Lopes-Mañ\'e \cite{FLM} and Lyubich \cite{Ly83b} proved that each rational map $f_\lam$ admits a unique invariant measure of maximal entropy $\mu_\lam$,
whose support, denoted as $J_\lam$, is the Julia set of $f_\lam$.

\begin{theorem-definition}
Let $(M,f)$ be a holomorphic family of rational maps as above.
The following conditions are equivalent:
\begin{enumerate}
\item the Julia sets $J_\lam$ move holomorphically with $\lam$;
\item $dd^c L(\lam)\equiv 0$, where $L(\lam):= \int \log |f'_\lambda|\mu_\lam$
is the Lyapunov exponent of the measure of maximal entropy $\mu_\lambda$ of $f_\lambda$;
\item the repelling periodic points of $f_\lam$ move holomorphically with $\lam$.
\end{enumerate}
We say that the family is \emph{stable} if any (hence, all) of the above conditions hold. 
\end{theorem-definition}

Recall that a family of Borel sets $E_\lam \subset \P^1$ \emph{move holomorphically with $\lam$} if
there exists a set $\L$ of holomorphic functions $\gamma : M \to \P^1$ such that
the graphs $\Gamma_{\gamma_1}, \Gamma_{\gamma_2}\subset M\times \mathbb P^1$
of two distinct functions $\gamma_1, \gamma_2 \in \L$ do not intersect,
and for any parameter $\lam \in M$, we have $\L_\lam := \{\gamma(\lam), \gamma \in \L \} = E_\lam$.
A crucial point here is that, if the sets $E_\lam$ move holomorphically with $\lam$, the same is true for $\overline {E_\lam}$. This fact, usually referred to as the $\lambda$--lemma, is a consequence of the Hurwitz theorem for (one-dimensional) holomorphic maps.

\medskip

A generalization of the above result for families in any dimension $k\geq 1$ was
proved by Berteloot, Dupont, and the first author in \cite{BBD18}.
As the Hurwitz theorem fails in higher dimensions,
the approach replaces the holomorphic motion of the Julia sets by a \emph{measurable holomorphic motion},
namely a family $\L$ of non-intersecting graphs $\gamma \colon M \to \P^k$ such that for any parameter $\lam \in M$, we have $\mu_\lam(\L_\lam) = 1$,
where $\mu_\lam$ is the unique  measure of
maximal entropy of the system $(\P^k, f_\lam)$ (see 
\cite{BD01,DS10,FS94}).
We still denote its support as $J_\lambda$. The result is as follows, see also
\cite{BBD18,BB22,B19} for details and further characterizations and
\cite{BB18}
for an explanation of the strategy of the proof.

\begin{theorem-definition}\label{t:bbd}
Let $(M,f)$ be a holomorphic family of endomorphisms of $\mathbb P^k$. 
The following conditions are equivalent:
\begin{enumerate}
\item there exists a measurable holomorphic motion for the Julia sets $J_\lam$;
\item $dd^c L(\lam)\equiv 0$, 
where $L(\lam):=\int \log |Df_\lam|\mu_\lambda$
is the sum of the  Lyapunov exponents of the measure of maximal entropy $\mu_\lambda$ of $f_\lambda$.
\end{enumerate}
We say that the family is \emph{stable} if any (hence, all) of the above conditions hold. 

Moreover, the following condition
\begin{itemize}
\item[(3)] the repelling periodic points of $f_\lam$ move holomorphically with $\lam$
\end{itemize}
implies the above two, and is equivalent to them if $k=2$, or if $M$ is an open connected and simply connected
subset of 
 the space 
 $\mathcal H_d(\P^k)$ of all holomorphic endomorphisms of $\P^k$ of algebraic degree $d$.
 \end{theorem-definition}

It is still an open question whether the stability of 
a general family of endomorphisms in any dimension is equivalent to the motion of all the repelling periodic cycles. 
On the other hand, it was proved in \cite{B16,B19} that the stability of a family implies at least
a weaker version of the motion of the repelling cycles. Notice that, in turn,
this weaker notion is still sufficient
to imply stability, see also \cite{B18}.
The definition is as follows.

 \begin{definition}\label{def:motion-rep-asympt}
  We say that \emph{asymptotically all} repelling cycles move holomorphically  on $M$
  if
  there exists a countable set
  $\mathcal P= \cup_{n\in \N^*} \mathcal P_n$ of holomorphic functions $\gamma : M \to \P^k$
  satisfying the following properties:
 \begin{enumerate}
 \item  $\mathrm{Card }\: \mathcal P_n = d^{kn} + o(d^{kn})$;
 \item the point $\gamma(\lambda)$
 belongs to $J_\lambda$ and is a fixed point of $f_\lambda^n$, for every $\lambda \in M$ and $\gamma \in \mathcal P_n$;
 \item  for all open subsets $M'\Subset M$, we have
\[ \frac{\mathrm{Card } \{
 \gamma \in \mathcal P_n \: \colon \: \gamma(\lambda) \mbox{ is repelling for all } \lambda \in M'\} }{d^{kn}}\underset{n\to \infty}{\longrightarrow} 1.\]
\end{enumerate}
\end{definition}


The cycles in Definition \ref{def:motion-rep-asympt}
can be seen, in some sense, as generic with respect to the measure of maximal entropy.
More precisely, consider the space $\mathcal J$, defined as
\begin{equation}\label{e:J}
\mathcal J := \{
\gamma : M\to \mathbb P^k \: \colon \: \gamma(\lam) \in J_\lam \quad \forall \lam \in M
\}, \end{equation}
where the maps $\gamma$ are holomorphic. 
$\mathcal J$ is a metric space, see Section \ref{s:prelim-inverses} for details.
We can turn $\mathcal J$ into a topological dynamical system by defining a natural map $\mathcal F$ as
\begin{equation}\label{e:F}
\mathcal F (\gamma) (\lambda) := f_\lambda (\gamma(\lambda)).
\end{equation}
A
\emph{web} is a probability measure
$\mathcal M$
compactly supported on $\mathcal J$ which is invariant under $\mathcal F$. An \emph{equilibrium web} 
\cite{BBD18}
is a web such that
\[
(p_\lam)_* \mathcal M = \mu_\lam \quad  \forall \lambda \in M,
\]
where we denote by $p_\lam\colon \mathcal J \to \mathbb P^k$ the natural map $\gamma \mapsto \gamma(\lambda)$. The properties of equilibrium webs are crucial in the approach in \cite{BBD18}.

\medskip

With this terminology, and owing to the equidistribution
of the repelling periodic cycles with respect to the measure of maximal entropy \cite{BD99},
we see in particular that
the fact that the existence of $\mathcal P$ as in Definition \ref{def:motion-rep-asympt} is equivalent to stability as in Theorem-Definition \ref{t:bbd}
leads to the following equidistribution result for periodic graphs in any stable family. This can be seen as a weak version of the implication (1)$\Rightarrow$(3).

\begin{theorem}\label{t:graph_equidistribution}
Let $(M,f)$ be a stable family of
endomorphisms of $\mathbb P^k$.
Then, for every 
$M'\Subset M$ and every
$n\in \mathbb N^*$, there exists
a non-empty subset $\mathcal P_n \subset \J$ of motions $\gamma$
of $n$-periodic points
such that 
$\gamma(\lam)$ is repelling for all $\lam \in M'$ and
\[
\lim_{n\to \infty} d^{-kn} \sum_{\gamma \in \mathcal P_n}
\delta_{\gamma} = \mathcal M,
\]
where $\mathcal M$ is an equilibrium web.
 \end{theorem}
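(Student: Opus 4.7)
The strategy is to construct $\M$ as a weak limit of normalized counting measures on graphs, using the asymptotic family of repelling motions furnished by stability as a ``discrete'' approximation of an equilibrium web, and verifying the defining properties of an equilibrium web by lifting the Briend--Duval equidistribution of periodic points \cite{BD99} from $\P^k$ to $\J$.

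More concretely, I would first invoke the equivalence between stability and the existence of a family $\mathcal{P} = \bigcup_n \mathcal{P}'_n$ satisfying Definition \ref{def:motion-rep-asympt} (established in \cite{B16,B19}), so that such a family is available inside $\J$. Since $\F$ permutes motions of $n$-periodic points and preserves the property ``$\gamma(\lam)$ is repelling for every $\lam\in M'$'', I may assume that each $\mathcal{P}'_n$ is $\F$-invariant, enlarging it by at most $O(d^{kn})$ elements if needed (the total number of motions of $n$-periodic points is itself only $d^{kn}(1+o(1))$). Set
\[
\mathcal{P}_n := \{\gamma \in \mathcal{P}'_n : \gamma(\lam) \text{ is repelling for every } \lam \in M'\},
\]
which has cardinality $d^{kn}(1+o(1))$ by property (3) of the definition, and define $\M_n := d^{-kn}\sum_{\gamma\in\mathcal{P}_n}\delta_\gamma$ on $\J$. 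By Montel's theorem, $\J$ is a compact metric space, so $(\M_n)$ is relatively compact in the weak topology of measures; let $\M$ be a subsequential limit. Then: (a) $\M$ has compact support automatically; (b) $\F$-invariance of $\M$ follows from $\F_*\M_n=\M_n$ (since $\F$ acts as a bijection on the $\F$-stable $\mathcal{P}_n$, with inverse $\F^{n-1}$) combined with the continuity of $\F$ on $\J$; (c) $(p_\lam)_*\M=\mu_\lam$ because $(p_\lam)_*\M_n$ differs in total variation by at most $o(1)$ from the normalized counting measure on \emph{all} $n$-periodic repelling points of $f_\lam$, which converges weakly to $\mu_\lam$ by \cite{BD99}.

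The main difficulty I anticipate is upgrading this subsequential convergence to the convergence of the full sequence required by the statement. I would handle this by a diagonal procedure: having fixed an equilibrium web $\M$ as a subsequential limit above, redefine $\mathcal{P}_n$ for every $n$ by sub-sampling $\mathcal{P}'_n$ in an $\F$-invariant way so that $\M_n$ lies within weak distance $1/n$ of $\M$. This is feasible because, for each fixed $\lam\in M'$, the uniform measure on $\mathcal{P}'_n$ already approximates $\mu_\lam$ under $p_\lam$, giving enough flexibility to match a prescribed equilibrium web up to $o(1)$ while preserving the asymptotic count $|\mathcal{P}_n|=d^{kn}(1+o(1))$ and the $\F$-invariance. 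Making this sub-sampling precise, with uniformity in $\lam\in M'$, is the delicate technical point of the proof.
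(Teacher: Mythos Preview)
Your overall route—build $\mathcal M_n:=d^{-kn}\sum_{\gamma\in\mathcal P_n}\delta_\gamma$ from the asymptotic family of repelling motions given by Definition~\ref{def:motion-rep-asympt}, take weak limits in $\J$, and identify any limit as an equilibrium web via Briend--Duval—is exactly the mechanism the paper invokes in the paragraph preceding Theorem~\ref{t:graph_equidistribution}. The check that any weak limit is $\F$-invariant and pushes forward to $\mu_\lambda$ is fine for $\lambda\in M'$.

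The genuine gap is your passage from subsequential to full convergence. Your ``sub-sampling'' idea cannot work as stated: since the limit must be a probability measure, any admissible $\mathcal P_n$ must satisfy $|\mathcal P_n|=d^{kn}(1+o(1))$, so an $\F$-invariant sub-sample of $\mathcal P'_n$ differs from $\mathcal P'_n$ by $o(d^{kn})$ elements, hence $\|\mathcal M_n-\mathcal M'_n\|\to 0$ in total variation. You therefore cannot steer the sequence towards a prescribed limit by sub-sampling; all admissible choices have the \emph{same} set of cluster values as the original sequence. The claim that ``approximation of $\mu_\lambda$ under $p_\lambda$ gives enough flexibility to match a prescribed equilibrium web'' is not correct.

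The right fix is uniqueness, not sub-sampling. By \cite{BBD18} there exists an \emph{acritical} equilibrium web $\mathcal M_0$; Lemma~\ref{l:strong-uniqueness} (from \cite{BR}) then says that any compactly supported probability measure $\mathcal M'$ on $\J$ with $(p_{\lambda_0})_*\mathcal M'=\mu_{\lambda_0}$ for a single $\lambda_0\in M'$ must equal $\mathcal M_0$. Since every subsequential limit $\mathcal M'$ of $(\mathcal M_n)$ satisfies $(p_{\lambda_0})_*\mathcal M'=\mu_{\lambda_0}$ by your step~(c), all cluster values coincide with $\mathcal M_0$ and the full sequence converges. This also closes a second gap in your argument: you only verify $(p_\lambda)_*\mathcal M=\mu_\lambda$ for $\lambda\in M'$, whereas an equilibrium web requires it for all $\lambda\in M$; once $\mathcal M=\mathcal M_0$ this comes for free.

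For comparison, the paper's detailed proof proceeds differently: Theorem~\ref{t:graph_equidistribution} is recovered as the case $\phi\equiv 0$ of Theorem~\ref{t:main}, where the periodic graphs are produced \emph{constructively} as fixed points of contracting inverse branches of $\mathcal F$ on nice tubes (Lemma~\ref{l:motions}), and the convergence is obtained directly from Proposition~\ref{p:l414} and the partition argument of Section~\ref{ss:proof-main}, without any appeal to compactness of $\J$ or to Definition~\ref{def:motion-rep-asympt}. Your approach is more elementary but relies essentially on the uniqueness lemma; theirs is self-contained and yields the weighted generalization.
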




In this paper, we address the question of the motion of cycles which similarly equidistribute invariant measures in  a much larger class, that we now introduce.

\medskip

Let $f$ be an endomorphism of $\mathbb P^k$ 
of algebraic degree $d\geq 2$
and let $\phi$ be a real continuous function on $\mathbb P^k$ (usually called a \emph{weight}).
The \emph{pressure} of $\phi$ is defined as 
\[
P(\phi) := \sup_{\nu} 
\big(h_\nu + \int \phi \nu\big)
\]
where 
the supremum is over all $f$-invariant measures $\nu$
and
$h_\nu$ denotes the measure-theoretic entropy of $\nu$. An 
\emph{equilibrium state} 
for the weight $\phi$ is defined  as a maximizer of the pressure function, i.e., as an invariant measure $\mu_\phi$ satisfying
\[
h_{\mu_\phi} + \int \phi \mu =  P(\phi).
\]
Assume that $f$ satisfies the following condition:

\medskip\noindent
{\bf (A)} \hspace{1cm} the local degree of the iterate $f^n$ satisfies
$$\lim_{n\to\infty} \frac{1}{n} \log\max_{a\in\P^k}\deg(f^n,a) =0.$$

\medskip\noindent
Here, $\deg(f^n,a)$ is the multiplicity of $a$ as a solution of the
equation $f^n(z)=f^n(a)$. Note that generic endomorphisms of $\P^k$ satisfy
 this condition, 
 see \cite{DS10b}.
 Let $\phi$ satisfy

\medskip\noindent
{\bf (B)}
 \hspace{2cm}
$\|\phi\|_{\log^q} 
<\infty    
\, \mbox{ for some } q>2 \quad \quad \mbox{ and } \quad \quad 
\Omega (\phi)
<\log d$,

\medskip\noindent
where we define $\Omega(\phi):=\max (\phi)-\min (\phi) $,
\[\|\phi\|_{\log^q} 
:= 
\sup_{a,b \in \P^k} |\phi(a)-\phi(b)|\cdot (\log^\star \dist (a,b) )^q,\] 
and
$\log^\star (\cdot) = 1+|\log (\cdot)|$.
The existence and uniqueness of the equilibrium state $\mu_\phi$  for $f$
under the
assumptions
{\bf (A)} and {\bf (B)}
have been proved 
in \cite{UZ13,BD23}
see also \cite{D12,SUZ,BD22}
for further properties of these measures, and
\cite{PU} and references therein
for previous results in dimension 1. 
The case $\phi=0$ corresponds to the case of the measure of maximal entropy, see for instance
\cite{DS10} and references therein
for an account of this case.

\medskip

By the definition of pressure and the assumption on $\Omega(\phi)$, all these equilibrium states satisfy $h_{\mu_\phi}> \log d^{k-1}$. It is proved in \cite{BR22}
that, given a stable family 
$(f_\lam)_{\lam\in M}$
and any $\lam_0\in M$,
for any $f_{\lambda_0}$-invariant  measure $\nu$ 
satisfying $h_\nu >\log d^{k-1}$, it is possible to construct an associated web $\mathcal M_{\lam_0, \nu}$
with the property that
\[
(p_{\lambda_0})_* \mathcal M_{\lam_0, \nu} = \nu,
\]
as well as the associated lamination.
This in particular applies to the equilibrium states as above. 
The following is our main result.


\begin{theorem}\label{t:main}
Let $(M,f)$ be a stable family of
endomorphisms of $\mathbb P^k$.
Take $\lam_0 \in M$ and assume that $f_{\lam_0}$ satisfies condition 
{\bf (A)}. 
Let $\phi : \mathbb P^k\to \mathbb R$
 satisfy 
 {\bf (B)}
and let $\mu_\phi$ be the equilibrium state for $f_{\lam_0}$
 associated to $\phi$. 
Then, for every 
$M'\Subset M$ and every
$n\in \mathbb N$, there exists
a non-empty subset $\mathcal P_{\phi,n} \subset \J$ of motions 
$\gamma$ of $n$-periodic points
such that 
$\gamma(\lam)$ is repelling for all $\lam \in M'$ and
\[
\lim_{n\to \infty} e^{- n P(\phi)} \sum_{\gamma \in \mathcal P_{\phi,n}}
e^{\phi (\gamma(\lambda_0)) + \dots + \phi ( f^{n-1}_{\lambda_0} (\gamma(\lambda_0)))} \delta_{\gamma} = \mathcal M_{\lam_0,\mu_\phi}.
\]
 \end{theorem}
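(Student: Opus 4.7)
The plan is to follow the general strategy used for Theorem \ref{t:graph_equidistribution} (the case $\phi=0$), replacing the unweighted Briend--Duval equidistribution by its $\phi$-weighted analogue and identifying the limiting measure on $\mathcal J$ with the web $\mathcal M_{\lambda_0,\mu_\phi}$ of \cite{BR22}. The argument combines three ingredients: a single-parameter weighted equidistribution of periodic points toward $\mu_\phi$ at $\lambda_0$; the holomorphic motion of asymptotically all repelling cycles guaranteed by stability; and a matching of the resulting weighted periodic-graph sums with the natural-extension construction of $\mathcal M_{\lambda_0,\mu_\phi}$.

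I would begin with the weighted equidistribution of periodic points at $\lambda_0$,
\[ e^{-nP(\phi)}\sum_{z\colon f_{\lambda_0}^n(z)=z} e^{S_n\phi(z)}\,\delta_z\longrightarrow \mu_\phi, \]
proved under (A) and (B) in \cite{UZ13,BD23}, together with the fact that the weighted mass carried by non-repelling cycles (and by cycles whose local inverse branches are degenerate) is negligible. Here $S_n\phi(z):=\sum_{j=0}^{n-1}\phi(f^j_{\lambda_0}(z))$. Using Definition \ref{def:motion-rep-asympt} and the stability of $(M,f)$, I would then associate to asymptotically all surviving repelling $n$-cycles $z$ a holomorphic section $\gamma_z\in\mathcal J$ which remains repelling on any $M'\Subset M$. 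The assumption $\Omega(\phi)<\log d$ in (B), together with the $\mu_\phi$-genericity of the cycles that do extend (via the stability framework of \cite{BR22}), should ensure that the weighted mass of the discarded cycles is $o(e^{nP(\phi)})$. Taking $\mathcal P_{\phi,n}$ to be the set of surviving motions, I would then set
\[ \mathcal M_n := e^{-nP(\phi)}\sum_{\gamma\in\mathcal P_{\phi,n}} e^{S_n\phi(\gamma(\lambda_0))}\,\delta_\gamma, \]
a positive measure on $\mathcal J$ with total mass tending to $1$.

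The family $\{\mathcal M_n\}$ is tight, as the graphs lie in a uniform Lipschitz family over any $M'\Subset M$ by the estimates in \cite{BBD18,BR22}. Any weak-$*$ cluster value $\mathcal M_\infty$ is therefore a probability measure on $\mathcal J$ which (a) projects to $\mu_\phi$ under $p_{\lambda_0}$, by the weighted equidistribution, and (b) is $\mathcal F$-invariant, since $\mathcal F$ permutes $\mathcal P_{\phi,n}$ and $S_n\phi$ is constant along $n$-cycles. To identify $\mathcal M_\infty$ with $\mathcal M_{\lambda_0,\mu_\phi}$, I would match the periodic-graph construction with the inverse-branch construction of the latter: a periodic motion $\gamma_z$ corresponds to the periodic history $(\dots,z,f_{\lambda_0}(z),\dots,f^{n-1}_{\lambda_0}(z),z,\dots)$ in the natural extension of $(f_{\lambda_0},\mu_\phi)$, and its weight $e^{S_n\phi(z)}$ matches the intrinsic weight of this history. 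A diagonal argument parallel to the one relating weighted preimage equidistribution to weighted periodic equidistribution for a single endomorphism should then yield $\mathcal M_\infty=\mathcal M_{\lambda_0,\mu_\phi}$.

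The main obstacle is this final identification. Unlike the case $\phi=0$, the equilibrium state $\mu_\phi$ is not pluripotential in a simple way and its natural extension lacks uniform hyperbolic control; the log-Hölder regularity $q>2$ in (B) is likely what supplies the decay of correlations needed for the transfer-operator arguments of \cite{BR22} to pass from the fiber at $\lambda_0$ to the whole of $\mathcal J$. Establishing that the weighted periodic-graph sums approximate $\mathcal M_{\lambda_0,\mu_\phi}$ jointly over $M'$, and not merely at $\lambda_0$, is the crucial technical point; it is where the full strength of the stability estimates of \cite{BBD18,BR22} must be brought to bear.
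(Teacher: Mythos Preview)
Your strategy differs fundamentally from the paper's, and the gap you yourself flag is genuine and, as stated, fatal.

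The paper does \emph{not} start from periodic points at $\lambda_0$ and then push them out via the asymptotic motion of Definition~\ref{def:motion-rep-asympt}. Instead it runs the entire Briend--Duval argument \emph{inside} the dynamical system $(\mathcal J,\mathcal F,\mathcal M_{\lambda_0,\mu_\phi})$: one shows that $\mathcal M_{\lambda_0,\mu_\phi}$-generic elements $\gamma\in\mathcal J$ admit many ``good'' inverse branches of $\mathcal F$ on tubes $T_{M'}(\gamma,\eta)$ (this is the content of Proposition~\ref{p:l411}, via the natural extension $(\hat{\mathcal J},\hat{\mathcal F},\hat{\mathcal M})$ and the Berteloot--Dupont--Molino distortion estimates), and then obtains the repelling periodic \emph{graphs} directly as Banach fixed points of compositions $\mathcal G_j=\mathcal G_j^{(1)}\circ\mathcal G_j^{(2)}$ of such branches (Lemma~\ref{l:motions}). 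The $\phi$-weights are controlled throughout by the transfer operator $\Lambda_\psi$ on $\mathcal J$, with $\psi(\gamma)=\phi(\gamma(\lambda_0))$; condition {\bf (M3)} and Lemma~\ref{l:l413} (which is where the $\log^q$ regularity with $q>2$ is actually used) keep the weighted sums close to $\mathcal M(\mathcal T)$ on each nice tube.

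Your approach instead relies on Definition~\ref{def:motion-rep-asympt}, which is a \emph{counting} statement: the proportion of $n$-periodic repelling points that fail to extend is $o(1)$ in cardinality, i.e., $o(d^{kn})$ points. But a set of $o(d^{kn})$ cycles can carry weighted mass up to $o(d^{kn})\cdot e^{n\max\phi}$, whereas $e^{nP(\phi)}\geq d^{kn}e^{n\min\phi}$; the ratio is $o(1)\cdot e^{n\Omega(\phi)}$, which does \emph{not} tend to $0$ when $\Omega(\phi)>0$. So the hypothesis $\Omega(\phi)<\log d$ alone cannot rescue this step: you would need the number of non-extendable cycles to be $o(d^{kn}e^{-n\Omega(\phi)})$, and nothing in the existing stability theory gives that. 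This is precisely why the paper rebuilds the periodic points from scratch in $\mathcal J$ rather than importing them from $\lambda_0$.

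One remark on the identification step: the paper has a clean shortcut you do not use. Lemma~\ref{l:strong-uniqueness} says that any compactly supported probability measure on $\mathcal J$ whose $p_{\lambda_0}$-pushforward agrees with that of an acritical web must equal that web. So once a cluster value $\mathcal M_\infty$ satisfies $(p_{\lambda_0})_*\mathcal M_\infty=\mu_\phi$, no invariance or natural-extension matching is needed to conclude $\mathcal M_\infty=\mathcal M_{\lambda_0,\mu_\phi}$. (The paper does not actually argue this way for the final convergence---it proves convergence directly via nested partitions and Proposition~\ref{p:l414}---but it does use this lemma to establish {\bf (M3)} for the web in the first place.)
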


Observe that, in particular, Theorem \ref{t:main} generalizes
to general weights $\phi$
Theorem \ref{t:graph_equidistribution} ,
the latter corresponding to the case $\phi = 0$.


\medskip

At the parameter $\lam_0$, the equidistribution of repelling periodic points with respect to the equilibrium state $\mu_\phi$ has been established in \cite[Theorem 4.10]{BD22}. The proof follows the now classical strategy by Briend-Duval \cite{BD99},
who showed this result for the measure of maximal entropy, which corresponds to the case $\phi=0$. On the other hand, when $\phi\neq 0$, as the Jacobian of $\mu_\phi$ is not constant, the proof requires more precise estimates on the contraction along generic inverse branches for $\mu_\phi$, which in turn follow from delicate distortion estimates along inverse branches due to Berteloot-Dupont-Molino \cite{BD19,BDM}.
In the current paper, we adapt this strategy
in the setting of the dynamical system
$(\mathcal J, \mathcal F, \mathcal M_{\lam_0, \phi})$. This requires to precisely control the contraction of $f$ on tubes 
(i.e., tubular neighbourhoods, of uniform radius in $\lambda$, 
of the graphs
in $M'\times \mathbb \P^k$ of elements of $\mathcal J$) centered at $\mathcal M_{\lam_0, \phi}$-generic elements of $\mathcal J$. As a result, we get the motions of repelling periodic points as repelling periodic elements for $\mathcal F$.

\subsection*{Acknowledgments}
The first author would like to thank the Simons Foundation, Laura De Marco, and Mattias Jonsson, for supporting and organizing the Simons Symposium 
on Algebraic, Complex, and Arithmetic Dynamics
in August 2022. This work was motivated by questions and discussions which arised during such event.

This project has received funding from
 the French government through the Programme
 Investissement d'Avenir
 (I-SITE ULNE /ANR-16-IDEX-0004,
 LabEx CEMPI /ANR-11-LABX-0007-01,
ANR QuaSiDy /ANR-21-CE40-0016,
ANR PADAWAN /ANR-21-CE40-0012-01)
managed by the Agence Nationale de la Recherche.

\section{Nice inverse branches for expanding webs}\label{s:prelim-inverses}

\subsection{Transfer operators on $(\mathcal J, \mathcal F)$.}




\medskip


We consider 
in this section a holomorphic family $(M,f)$
of endomorphisms of $\mathbb P^k$.
We let $\mathcal J$ be as in \eqref{e:J}
(observe that stability is not required to define this set).
We can turn $\mathcal J$ into a Polish (i.e.,  separable complete metric) space $(\J, \dist_\J)$ as a closed subset of the space $\mathcal O(M,\P^k)$ endowed with the metric of local uniform convergence.
More precisely,
the distance between two elements $\gamma_1, \gamma_2 \in \J$ is given by
\[
\dist_\J(\gamma_1,\gamma_2) := \sum_{n=0}^{+\infty} 2^{-n} \max\left(1, \sup_{\lam \in K_n}\dist_{\P^k}(\gamma_1(\lam),\gamma_2(\lam))\right),
\]
where the family $(K_n)_{n\in \N}$ is an exhaustion of $M$, 
namely a nested sequence of compact sets whose union is $M$ and such that $K_n$ is a subset of the interior of $K_{n+1}$ for every $n\in \N$.
The resulting topology is independent of the choice of the exhaustion.

Let  
$\mathcal F\colon \mathcal J\to \mathcal J$ be as in \eqref{e:F}. 
For $\gamma \in \mathcal J$, we denote by $\Gamma_\gamma$ the graph of $\gamma$ in $M\times \mathbb P^k$.
We denote by $\mathcal J_s$ the subset of $\mathcal J$ given by
$$\mathcal J_s := \{\gamma\in\mathcal{J}:\Gamma_\gamma\cap GO(C_f)
  \ne \emptyset\},$$
and set $\mathcal X := \mathcal J \setminus \mathcal J_s$.
Observe that every element $\gamma \in\mathcal X$ admits $d^{k}$ well defined inverse elements by
$\mathcal F$ in $\mathcal X$, i.e.,
elements $\gamma'\in \mathcal X$ such that $\mathcal F (\gamma')=\gamma$.

We let $\psi\colon \mathcal J\to \R$ be a continuous function 
and define the (transfer) operator $\Lambda_\psi$ acting on measurable real functions on $\mathcal J$ as
%
\begin{equation}\label{e:def-Lambda-psi}
\Lambda_\psi (g)(\gamma)
=\sum_{\mathcal F(\gamma')=\gamma} e^{\psi (\gamma')} g (\gamma').
\end{equation}
Observe that the operator $\Lambda_\psi$ preserves positivity. However, even when $g$ is continuous, 
$\Lambda_\psi(g)$ needs not be continuous
(as, for example, the system $\mathcal F\colon \mathcal J\to \mathcal J$ may not have a well defined degree). On the other hand, as every $\gamma\in\mathcal X$ has precisely
$d^{k}$ preimages under $\mathcal F$, which are also in $\mathcal X$, the operator $\Lambda_\psi$ defines a continuous operator from 
$\mathcal C^0(\mathcal X)$
to
$\mathcal C^0(\mathcal X)$.

Given a positive measure $\mathcal N$ satisfying $\mathcal N (\mathcal J_s)=0$,
the measure $\mathcal N$  integrates any continuous functions on $\mathcal X$, and we can define $\Lambda_\psi^*\mathcal N$ by the relation
\[
\langle \Lambda_\psi^* \mathcal N, g\rangle=
\langle \mathcal N , \Lambda_\psi (g)
\rangle,
\]
where $g$ is any continuous function on $\mathcal J$.
We will use
the operator 
$\Lambda_\psi^*$
only on measures vanishing on $\mathcal J_s$. 

\begin{lemma}\label{l:rho-N-equivalent}
The following assertions are equivalent:
\begin{enumerate}
\item there exists a continuous and strictly positive
function $\theta\colon \mathcal X \to \mathbb R$ such that $\Lambda_\psi^n (g)\to c_g \theta$ for every continuous function $g\colon \mathcal J \to \mathbb R$, where the constant $c_g$ depends linearly and continuously on $g$;
\item
there exists a positive 
measure $\mathcal N$
on $\mathcal J$, satisfying $\mathcal N(\mathcal J_s)=0$, such that $(\Lambda^*_\psi)^n \delta_\gamma\to c_\gamma \mathcal N$
for every $\gamma \in \mathcal X$, where $c_\gamma$ is a
strictly positive
 constant depending continuously on $\gamma \in \mathcal X$.
\end{enumerate}
\end{lemma}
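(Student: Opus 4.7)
The plan is to exploit the duality identity
\[
\Lambda_\psi^n(g)(\gamma) \;=\; \bigl\langle (\Lambda_\psi^*)^n \delta_\gamma,\, g\bigr\rangle,
\]
valid for every $\gamma \in \mathcal X$ and every continuous $g \colon \mathcal J \to \mathbb R$. The observation that makes this pairing meaningful on both sides is the one recalled before the statement: every $\mathcal F^n$-preimage of an element of $\mathcal X$ again lies in $\mathcal X$, so $(\Lambda_\psi^*)^n\delta_\gamma$ is a finite positive combination of Dirac masses all supported in $\mathcal X$, while $\Lambda_\psi^n(g)$ is continuous on $\mathcal X$. Both implications amount to interpreting the limit of this pairing, respectively as convergence of functions on $\mathcal X$ and as weak convergence of measures on $\mathcal J$.

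For (1)$\Rightarrow$(2), I would fix $\gamma \in \mathcal X$ and pass to the limit in the duality to get, for every continuous $g$,
\[
\bigl\langle (\Lambda_\psi^*)^n\delta_\gamma,\, g\bigr\rangle \;\longrightarrow\; c_g\, \theta(\gamma).
\]
The functional $g\mapsto c_g$ is linear, positive (since $\Lambda_\psi$ preserves positivity), and continuous by hypothesis; the Riesz representation theorem then produces a unique positive Radon measure $\mathcal N$ on $\mathcal J$ with $c_g = \langle \mathcal N, g\rangle$. Setting $c_\gamma := \theta(\gamma)$ yields the announced weak convergence together with the continuity and positivity of $c_\gamma$ in $\gamma \in \mathcal X$. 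It remains to check $\mathcal N(\mathcal J_s)=0$: any continuous $g$ vanishing on $\mathcal X$ satisfies $\Lambda_\psi^n(g)\equiv 0$ on $\mathcal X$ by the preimage property above, whence $c_g=0$, so $\mathcal N$ is supported in $\overline{\mathcal X}$; the remaining mass on $\overline{\mathcal X}\cap \mathcal J_s$ is controlled using the description of $\mathcal J_s$ as the countable union, over $(n,m)$, of the loci $\{\gamma : \Gamma_\gamma \cap f^{-n}(f^m(C_f))\ne \emptyset\}$, each of which is negligible under the approximating atomic measures $(\Lambda_\psi^*)^n\delta_\gamma$ and hence under the weak limit $\mathcal N$.

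The direction (2)$\Rightarrow$(1) is the more routine half: pairing $(\Lambda_\psi^*)^n\delta_\gamma \to c_\gamma \mathcal N$ with an arbitrary continuous $g\colon \mathcal J\to\mathbb R$ gives, pointwise in $\gamma\in\mathcal X$,
\[
\Lambda_\psi^n(g)(\gamma)\;\longrightarrow\; c_\gamma\, \langle \mathcal N, g\rangle,
\]
so the assignments $\theta(\gamma):=c_\gamma$ and $c_g := \langle \mathcal N, g\rangle$ produce the data required in (1). Continuity and positivity of $\theta$ on $\mathcal X$ follow from the hypothesis on $c_\gamma$, while linear and continuous dependence of $c_g$ on $g$ is immediate.

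The main technical obstacle I expect is the verification that $\mathcal N(\mathcal J_s)=0$ in direction (1)$\Rightarrow$(2). The Riesz construction only manufactures a measure on $\mathcal J$; to upgrade it to a measure charging no part of the critical locus $\mathcal J_s$ one must combine the fact that each approximating measure $(\Lambda_\psi^*)^n\delta_\gamma$ is supported in $\mathcal X$ with the countable-union structure of $\mathcal J_s$ in terms of the grand orbit $GO(C_f)$. Once this is in place, the rest of the proof is a direct application of Riesz duality.
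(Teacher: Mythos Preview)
Your approach is essentially identical to the paper's: both directions rest on the duality identity $\Lambda_\psi^n(g)(\gamma)=\langle(\Lambda_\psi^*)^n\delta_\gamma,g\rangle$, and the assignments $\theta(\gamma):=c_\gamma$, $c_g:=\langle\mathcal N,g\rangle$ (and vice versa) are exactly what the paper does.

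The one point where you take a detour is the verification of $\mathcal N(\mathcal J_s)=0$, which you flag as the main technical obstacle. The paper sidesteps this entirely: rather than producing $\mathcal N$ on $\mathcal J$ via Riesz and then arguing that it does not charge $\mathcal J_s$, it defines $\mathcal N$ directly on $\mathcal X$ by $\langle\mathcal N,g\rangle:=c_g$ and then \emph{extends by zero} to $\mathcal J_s$. This is legitimate because, as you yourself observe, $c_g$ depends only on $g|_{\mathcal X}$ (preimages of points in $\mathcal X$ stay in $\mathcal X$), so the functional is already determined by restriction to $\mathcal X$; the extension by zero then gives $\mathcal N(\mathcal J_s)=0$ by fiat, and the weak convergence against continuous $g\colon\mathcal J\to\mathbb R$ follows from the same duality computation. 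Your route through weak limits and the grand-orbit decomposition of $\mathcal J_s$ is unnecessary and, as you suspect, more delicate than it needs to be.
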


\begin{proof}
(1)$\Rightarrow$(2)
Define a measure $\mathcal N$ on $\mathcal X$ 
by setting $\langle\mathcal N, g\rangle := c_g$
for every continuous function $g\colon \mathcal X\to \mathbb R$, where $c_g$ is as in (1).
We can extend $\mathcal N$ to a measure on $\mathcal J$ by setting $\mathcal N (\mathcal J \setminus \mathcal X)=0$. Such measure is positive since $c_g \geq 0$ for every non-negative $g$.

\medskip

For every $\gamma \in \mathcal X$ and continuous function $g\colon \mathcal X\to \mathbb R$,
we have
\[
\langle
(\Lambda_\psi^*)^n\delta_\gamma, g\rangle=
\langle \delta_\gamma, \Lambda^n_\psi (g)\rangle
\to
\langle \delta_\gamma, c_g \theta\rangle= \theta(\gamma)c_g.
\]
By the definition of $\mathcal N$, and setting $c_\gamma:= \theta (\gamma)$, 
this shows that $(\Lambda_\psi^*)^n \delta_\gamma \to c_\gamma \mathcal N$.

\medskip

(2)$\Rightarrow$(1)
Define a function $\theta\colon \mathcal X\to \mathbb R$ by $\theta(\gamma) := c_\gamma$, for every $\gamma \in \mathcal X$, where $c_\gamma$
 is as in (2).
For every $\gamma \in \mathcal X$ and continuous function $g\colon \mathcal J \to \mathbb R$ we have
\[
\Lambda_\psi^n (g) (\gamma)
=
\langle \delta_\gamma, \Lambda_\psi^n (g)\rangle
=
\langle 
(\Lambda_\psi^*)^n \delta_\gamma, g\rangle
\to \langle c_\gamma  \mathcal  N, g\rangle=
c_\gamma\langle \mathcal N, g\rangle.
\]
 By the definition of $\theta$, and setting $c_g:= \langle\mathcal N,g\rangle$, this shows that
 $\Lambda_\psi^n (g)\to c_g \theta$.
\end{proof}

\begin{remark} The two assertions in Lemma \ref{l:rho-N-equivalent} stay equivalent if $\theta$
is assumed to just be non-negative in (1) and $c_\gamma$ is assumed to just be non-negative in (2).
\end{remark}

For every graph $\gamma \in \mathcal X$ and every integer $n\in \N$, define the measure

\begin{equation}\label{e:web_definition}
\mathcal M_{\gamma,n}
:=
\theta \cdot \theta(\gamma)^{-1}
\cdot (\Lambda_\psi^*)^n \delta_\gamma =
\theta(\gamma)^{-1} \sum_{\mathcal F^n (\gamma')}
e^{\psi(\gamma') + \ldots + \psi(\mathcal F^{n-1} (\gamma'))} \theta(\gamma')\delta_{\gamma'}.
\end{equation}

\begin{lemma}\label{l:criterion-conditions}
If the  conditions in Lemma \ref{l:rho-N-equivalent} are satisfied
and $\theta$ and $\mathcal N$ are as in that lemma, 
the measure $\mathcal M := \theta \mathcal N$ is well defined and the following properties hold:
\begin{enumerate}
\item $\Lambda_\psi\theta = \theta$;
\item $\Lambda_\psi^* \mathcal N= \mathcal N$;
\item $\mathcal M$ is a probability measure and $\mathcal M (\mathcal J_s)=0$;
\item $\mathcal M$ is $\mathcal F$-invariant;

\item 
$\mathcal M_{\gamma,n} \to \mathcal M$
for every
$\gamma \in \mathcal X$;
\item $\F^{-1}(\Supp \M) \subseteq \Supp \M$.
\end{enumerate}
\end{lemma}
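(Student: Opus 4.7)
The plan is to derive (1)--(6) as formal consequences of the dual convergence statements in Lemma \ref{l:rho-N-equivalent}, after fixing a scaling of the pair $(\theta,\mathcal N)$, which is determined only up to reciprocal rescaling, so that $\int \theta\,d\mathcal N=1$. Under this normalization, $\mathcal M=\theta\mathcal N$ has total mass $1$ and vanishes on $\mathcal J_s$ because $\mathcal N$ does, yielding (3). Reading off the proof of (1)$\Rightarrow$(2) of Lemma \ref{l:rho-N-equivalent}, the same normalization also gives the identification $c_\gamma=\theta(\gamma)$, which I will use below.

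For (1), I apply $\Lambda_\psi$ to both sides of the pointwise convergence $\Lambda_\psi^n g\to c_g\theta$ on $\mathcal X$: since each $\gamma\in\mathcal X$ has exactly $d^k$ preimages in $\mathcal X$ and $\Lambda_\psi$ is a finite sum, pointwise convergence on $\mathcal X$ is preserved, so $\Lambda_\psi^{n+1}g\to c_g\Lambda_\psi\theta$; the same hypothesis also gives $\Lambda_\psi^{n+1}g\to c_g\theta$, and choosing any $g$ with $c_g\neq0$ (which exists since $\mathcal N$ is nontrivial and $c_g=\int g\,d\mathcal N$) yields $\Lambda_\psi\theta=\theta$. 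The analogous argument with $\Lambda_\psi^*$ applied to $(\Lambda_\psi^*)^n\delta_\gamma\to c_\gamma\mathcal N$, combined with the strict positivity of $c_\gamma$, gives (2).

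The invariance (4) is the key formal step. For a continuous $g\colon\mathcal J\to\mathbb R$, set $h(\gamma'):=\theta(\gamma')g(\mathcal F\gamma')$. A direct computation using (1) gives
\[
\Lambda_\psi h(\gamma)=\sum_{\mathcal F\gamma'=\gamma}e^{\psi(\gamma')}\theta(\gamma')g(\gamma)=g(\gamma)\,\Lambda_\psi\theta(\gamma)=g(\gamma)\,\theta(\gamma).
\]
Pairing with $\mathcal N$ and using (2) then yields
$\langle\mathcal M,g\rangle=\int g\theta\,d\mathcal N=\int\Lambda_\psi h\,d\mathcal N=\int h\,d\mathcal N=\langle\mathcal F_*\mathcal M,g\rangle$, as desired.

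For the convergence (5), rewrite $\mathcal M_{\gamma,n}=\theta\cdot\theta(\gamma)^{-1}(\Lambda_\psi^*)^n\delta_\gamma$ and, using $c_\gamma=\theta(\gamma)$ together with the weak convergence $(\Lambda_\psi^*)^n\delta_\gamma\to\theta(\gamma)\mathcal N$, pass to the limit. This is the main technical obstacle: $\theta$ is continuous and positive on $\mathcal X$ but need not extend to a bounded continuous function on $\mathcal J$, so multiplying the weakly convergent measures by $\theta$ is not automatic. I would exploit that, by (1), $\mathcal M_{\gamma,n}$ is a probability measure for every $n$ (since $\Lambda_\psi^n\theta=\theta$), hence the sequence is tight, and then approximate $\theta$ from below by an increasing family of non-negative continuous functions compactly supported in $\mathcal X$, passing to the weak limit. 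Finally, for (6), take $\gamma''\in\mathcal F^{-1}(\Supp\mathcal M)\cap\mathcal X$ and choose an open sheet $V\subset\mathcal X$ containing $\gamma''$ on which $\mathcal F|_V\colon V\to U$ is a homeomorphism onto a neighborhood $U$ of $\mathcal F\gamma''$. Testing $\Lambda_\psi^*\mathcal N=\mathcal N$ against $1_V$ produces the local identity
\[
\mathcal N(V)=\int_U e^{\psi\circ(\mathcal F|_V)^{-1}}\,d\mathcal N>0,
\]
since $\mathcal M(U)>0$ forces $\mathcal N(U)>0$. Multiplying by $\theta>0$ on $V$ gives $\mathcal M(V)>0$, hence $\gamma''\in\Supp\mathcal M$.
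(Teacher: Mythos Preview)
Your argument tracks the paper's almost exactly for (1)--(4): the paper also obtains (1) by pushing $\Lambda_\psi$ through the limit (using $g=\theta$ itself, noting $c_\theta>0$), derives (2) from the observation $c_{\Lambda_\psi g}=c_g$ rather than by applying $\Lambda_\psi^*$ directly (a cosmetic difference), and proves (4) via the same identity $\Lambda_\psi(\theta\cdot g\circ\mathcal F)=g\theta$.

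For (5) you are being more careful than the paper. The paper simply writes $\langle\mathcal M_{\gamma,n},g\rangle=\theta(\gamma)^{-1}\langle(\Lambda_\psi^*)^n\delta_\gamma,\theta g\rangle$ and passes to the limit using Lemma~\ref{l:rho-N-equivalent}(2) with test function $\theta g$. This is legitimate because the measures $(\Lambda_\psi^*)^n\delta_\gamma$ are discrete and supported in $\mathcal X$, and the convergence in the proof of (1)$\Rightarrow$(2) of Lemma~\ref{l:rho-N-equivalent} is derived pointwise as $\Lambda_\psi^n(h)(\gamma)\to c_h\theta(\gamma)$ for $h$ continuous on $\mathcal X$; so your tightness/approximation detour is not needed.

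For (6) you have a genuine gap: you restrict to $\gamma''\in\mathcal F^{-1}(\Supp\mathcal M)\cap\mathcal X$, but the statement asserts $\mathcal F^{-1}(\Supp\mathcal M)\subseteq\Supp\mathcal M$ without any such restriction, and preimages lying in $\mathcal J_s$ are not covered by your sheet argument (indeed, no local inverse branch of $\mathcal F$ exists there). The paper avoids this by taking $\gamma'\in\mathcal J$ arbitrary, letting $U'$ be the connected component of $\mathcal F^{-1}(U)$ containing $\gamma'$, and arguing directly that $\mathcal N(U)>0$ forces $\Lambda_\psi^*\mathcal N(U')>0$; by (2) this gives $\mathcal N(U')>0$, hence $\gamma'\in\Supp\mathcal N=\Supp\mathcal M$.
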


\begin{proof}
(1)
This is a consequence of the first condition in Lemma \ref{l:rho-N-equivalent}. Indeed, as $\Lambda_\psi$ is continuous on $\mathcal C^0 (\mathcal X)$, we have
\[
c_\theta \theta =
\lim_{n\to \infty} \Lambda_\psi^{n+1} \theta
=
\Lambda_\psi 
(\lim_{n\to \infty} \Lambda_\psi^n \theta) =
\Lambda_\psi (c_\theta \theta)
= c_\theta \Lambda_\psi  \theta.
\]
As $c_\theta>0$, we deduce 
that $\theta = \Lambda_\psi (\theta)$, as desired.
Observe in particular that this implies that $c_\theta=1$, 
since  $\Lambda_\psi^n \theta=\theta$ for every $n\in \mathbb N$  and $\Lambda_\psi^n \theta \to c_\theta \theta$. 

\medskip

(2)
Observe that $\Lambda_\psi^* \mathcal N$ is well defined since $\mathcal N(\mathcal J_s)=0$, and still
satisfies  $(\Lambda_\psi^* \mathcal N) (\mathcal J_s)=0$. It is enough to check that 
$\langle \Lambda_\psi^* \mathcal N, g\rangle = \langle\mathcal N, g\rangle$ for every $g \in \mathcal C^0 (\mathcal J)$.
With the notation of Lemma \ref{l:rho-N-equivalent}, we also have $\langle\mathcal N, g\rangle=c_g$, where $c_g$ is characterized by the convergence $\Lambda_\psi^n g \to c_g \rho$.
Observe in particular that $c_g = c_{\Lambda_\psi g}$. It follows that,
for every $g$ as above, we have
\[
\langle
 \Lambda_\psi^* \mathcal N, g
\rangle =
\langle
\mathcal N, \Lambda_\psi g\rangle
=
c_{\Lambda_\psi g}= c_g = \langle\mathcal N, g\rangle.
\]

(3)
As $\theta$ is positive on $\mathcal X$, $\mathcal N(\mathcal J_s)=0$, and   $ \langle\mathcal N, \theta\rangle=c_\theta =1$, we have  $\theta \in L^1 (\mathcal N)$.
As $\mathcal N(\mathcal J_s)=0$, we have $\mathcal M (\mathcal J_s)=0$. As $\theta$ is non-negative and $\mathcal N$ is a positive measure, $\mathcal M$ is a positive measure. It is a probability measure since
\[\mathcal M (\mathcal X) = \langle\theta\mathcal N, \mathbb 1_{\mathcal X}\rangle
=
\langle\mathcal N, \theta\rangle = c_\theta =1.\]

(4)
We need to check that $\langle\mathcal M, g\rangle
=
\langle \M,g \circ \mathcal F \rangle$
for every continuous function $g$ on $\mathcal J$.
As $\mathcal M (\mathcal J_s)=0$, the pairing can be computed on $\mathcal X$. A direct computation gives that
\[
\langle \mathcal M, g \circ \mathcal F
\rangle
=
\langle \theta \mathcal N, g \circ \mathcal F\rangle
=
\langle \mathcal N, \theta\cdot g \circ \mathcal F\rangle
=
\langle \mathcal N, \Lambda_\psi( \theta\cdot g \circ \mathcal F)\rangle,\]
where in the last step we used the equality
$\Lambda_\psi^*\mathcal N = \mathcal N$. It follows from the definition of $\Lambda_\psi$ and the $\Lambda_\psi$-invariance of $\theta$
that
\[
\Lambda_\psi (\theta \cdot g \circ F) = g \cdot \Lambda_\psi (\theta) = g \theta.
\]
Together with the previous equalities, this gives
\[
\langle \mathcal M, g \circ \mathcal F
\rangle
=
\langle
\mathcal N, \theta  g\rangle =
\langle\mathcal M, g\rangle,
\]
as desired.

\medskip

(5)
Take $g\in \mathcal C^0 (\mathcal J)$. By the second assertion of Lemma \ref{l:rho-N-equivalent}, and recalling that $c_\gamma=\theta(\gamma)$ for every
$\gamma\in \mathcal X$,
we have
\[
\langle \mathcal M_{\gamma, n}, g\rangle
=
\theta(\gamma)^{-1}
\langle
\theta 
\cdot (\Lambda_\psi^*)^n \delta_\gamma, g
\rangle
\to
\theta(\gamma)^{-1}
\langle c_\gamma \mathcal N, \theta g\rangle
=
\langle\mathcal M, g\rangle.
\]
The assertion follows.

\medskip

(6)
Since $\theta$ is positive, we have $\Supp \M = \Supp \mathcal N$. Take $\gamma \in \Supp \mathcal N$ and $\gamma' \in \J$ such that $\F(\gamma') = \gamma$.
Fix a small open set $U$ containing $\gamma$ and let $U'$ be the connected component of $\mathcal F^{-1} (U)$ containing $\gamma'$ (which is open since $\mathcal F$ is continuous).
Since $\gamma \in \Supp \mathcal N$, we have $\mathcal N(U)>0$.
By the definition of $\Lambda_\psi$, we have that
$\Lambda_\psi^*\mathcal N (U')>0$.
This gives that $\gamma' \in \Supp \Lambda^*_\psi \mathcal N$.
By (2), it follows that $\gamma'\in \Supp \mathcal N$, as desired.

\end{proof}




\subsection{Estimates of contraction along inverse branches.}
We assume in the following that the family $(M,f)$
admits
a web $\mathcal M$, i.e., an
$\mathcal F$-invariant compactly supported
probability measure on $\mathcal J$, satisfying the following properties:

\begin{itemize}
\item[{\bf (M1)}] $\mathcal M$ is \emph{acritical}, i.e., we have $\mathcal M (\mathcal J_s)=0$;
\item[{\bf (M2)}] there exists a constant
$A_1>0$ such that, 
for every $\lam \in M$, 
the probability measure $(p_\lam)_* (\mathcal M)$ is ergodic and  
the Lyapunov exponents
of $(p_\lam)_* (\mathcal M)$ are strictly larger than $A_1$;
\item[{\bf (M3)}]
there
exists a continuous
function $\psi\colon \mathcal X \to \R$, a strictly positive continuous function 
$\theta\colon  \mathcal X\to \mathbb R$, and a positive measure $\mathcal N$ on $\mathcal J$ 
with $\mathcal N (\mathcal J_s)=0$
such that
\begin{enumerate}
\item 
$\mathcal M = \theta \mathcal N$,
\item $\Lambda_\psi \theta= \theta$,
\item $\Lambda_\psi^* \mathcal N= \mathcal N$;

\item for $\mathcal M$-almost every $\gamma \in \mathcal J$, we have
$\mathcal M_{\gamma,n} \to \mathcal M$,
where $\M_{\gamma, n}$ is defined as in \eqref{e:web_definition}.
\end{enumerate}
\end{itemize}



\medskip


Observe that condition {\bf (M1)} is sufficient to imply  the stability of the family $(M,f)$ in the sense of Theorem-Definition \ref{t:bbd}, see \cite[Theorem 4.1]{BBD18}.
Conversely, a stable family always admits at least a web $\mathcal M$ satisfying the above assumptions.
To this purpose, it is enough to consider an acritical equilibrium web $\mathcal M_0$, 
as constructed in \cite{BBD18}. This web corresponds to the case of $(p_\lam)_* \mathcal M= \mu_\lam$
(the measure of maximal entropy) for all $\lam \in M$, and $\psi \equiv - k \log d$.
An asymptotic contraction property along generic inverse branches for 
$\mathcal M_0$ is proved in \cite[Proposition 4.2 and 4.3]{BBD18}. This property is the key to getting the measurable holomorphic motion in Theorem-Definition \ref{t:bbd}. It was generalized in \cite{BR22} for the
larger class of webs satisfying {\bf (M1)} and {\bf (M2)}, see Proposition \ref{p:bbd-gen} below.
We aim here at establishing a more quantitative version of such results, under the extra condition {\bf (M3)}. In particular, all this will also apply 
to the equilibrium web $\mathcal M_0$ as above.

\medskip


Given 
$\Omega\subset M$, 
$\gamma\in \mathcal X$ and $\eta>0$,
we denote by
$T_\Omega(\gamma, \eta)$ the $\eta$-neighbourhood of the graph $\Gamma_\gamma$ of $\gamma$ in $\Omega \times \P^k$, i.e.,
\[
T_\Omega (\gamma, \eta) := \{
(\lambda, z) \in \Omega \times \P^k\colon \dist_{\P^k}(z,\gamma(\lambda))< \eta
\}.
\]
We call such neighbourhood a \emph{tube} at $\gamma$ over $\Omega$. Observe that a tube $T_\Omega(\gamma, \eta)$ corresponds to the ball $\mathcal B_\Omega (\gamma, \eta)$ in the metric space $(\mathcal J, \dist_{\Omega})$,
where the distance $\dist_\Omega$ is given by
\begin{equation}\label{e:distance-Omega}
\dist_\Omega (\gamma_1, \gamma_2) := \sup_{\lam \in \Omega}
\dist_{\P^k} (\gamma_1(\lambda),\gamma_2 (\lambda)).
\end{equation}




Given a tube $T= T_\Omega (\gamma,\eta)$,
the \emph{slice}
$T_{|\lambda}$ is the ball
$B(\gamma(\lam), \eta) = T \cap (\{\lambda\}\times \P^k)$.
More generally, given the image of a tube $T$ by a holomorphic map $g\colon T\to \Omega\times \P^k$ fibered over $M$, we define the slice $g(T)_{|\lam}$
of $g(T)$ at $\lambda$
as $g(T)\cap \{\lam\}\times \P^k$.


\medskip

We fix in what follows a constant $0<A_0<A_1$, where $A_1$ is given in {\bf (M2)}.

\begin{definition}
Given $\Omega\subset M$,
$\gamma \in \mathcal X$, a tube $T$ at $\gamma$ over $\Omega$,
and $n \in \mathbb N$, we say that a map
$g \colon T\to g(T)$ is a 
\emph{$m$-good inverse branch of $f$ of order $n$ on $T$} if
\begin{enumerate}
\item $g \circ f^n = id_{g(T)}$;
\item for all $\lam\in \Omega$, $\diam f^l_\lam (g(T)_{|\lam})\leq e^{-m- (n-l)A_0}$ for all $0 \le l \leq n$.
\end{enumerate}
\end{definition}

Observe that, by definition, we have diam $T_{|\lam}\leq e^{-m}$ for every tube $T$ admitting a $m$-good inverse branch as above. 

\medskip

Given an inverse branch $g$
of $f^m$ defined on a tube $T$,
given any $\gamma\in \mathcal J$ with $\Gamma_\gamma \subset  T$ 
we can in particular associate to such 
inverse branch a map
$\gamma_g$
such that $\Gamma_{\gamma_g}\subset g(T)$ and $\mathcal F (\gamma_g)=\gamma$. In particular, the association $\gamma \mapsto \gamma_g$ defines a map $\mathcal G$ on the ball $\mathcal B_\Omega (\gamma,\eta)$, that we can see as an inverse branch for $\mathcal F$ over such ball.
%
%


Given $\Omega\subset M$ and a tube $T$ at $\gamma\in \mathcal X$ over $\Omega$,
we denote by $\mathcal M_{T,n}^{(m)}$
the measure
\begin{equation}\label{e:M-good-branches}
\mathcal M^{(m)}_{T,n}:=
\theta (\gamma)^{-1}
\sum_{\gamma_g}
e^{\psi(\gamma_g)+ \ldots +\psi(\mathcal F^{n-1}(\gamma_g))}
\theta (\gamma_g) \delta_{\gamma_g},
\end{equation}
where the sum is over the preimages $\gamma_g$  of $\gamma$
associated to $m$-good inverse branches $g$ of $f$
of order $n$ on $T$.

\begin{remark}
We have $\mathcal M^{(m)}_{T,n}\leq
\mathcal M_{\gamma,n}$ for all $n\geq 0$. Hence, the condition \textbf{(M3)} ensures that any limit value $\mathcal M'_T$ of the sequence 
$\{\mathcal M^{(m)}_{T,n}\}_{n}$
satisfies $\|\mathcal M'_T\|\leq 1$.
\end{remark}

\begin{definition}
Given $\Omega \subset M$ and $m>1$, we say that a tube $T$ at $\gamma\in \mathcal X$ over $\Omega$
is 
\emph{$m$-nice} 
if
$\|\mathcal M^{(m)}_{T,n}\|\geq 1-1/m$ for all $n$ sufficiently large.
We say that a ball $\mathcal B_\Omega(\gamma, \eta)$ is $m$-nice if the tube $T_\Omega(\gamma, \eta)$ is $m$-nice.
\end{definition}



\begin{remark}
Every $m$-nice tube $T(\gamma,\eta)$ satisfies $\eta\leq e^{-m}$, and $\|\mathcal M'_{T}\|\geq 1-1/m$ for every limit value $\mathcal M'_T$ of the sequence $\mathcal M^{(m)}_{T,n}$.
\end{remark}

The following is the main result of this section, giving a quantitative control on the contraction along $\mathcal M$-generic
inverse branches of $\mathcal F$.

\begin{proposition}\label{p:l411}
Let $\mathcal M$ be a compactly supported and 
ergodic 
web
satisfing {\bf (M1)}, {\bf (M2)}, and {\bf (M3)}.
For every $\Omega \subset M$ and $\gamma\in \mathcal X$, the tube $T_\Omega(\gamma,\eta)$
is $m$-nice if $\eta$ is sufficiently small.
\end{proposition}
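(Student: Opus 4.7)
The plan is to construct $m$-good inverse branches of $f^n$ on $T_\Omega(\gamma,\eta)$ whose total $\mathcal M_{\gamma,n}$-mass is at least $1-1/m$ for all large $n$. Since $\Lambda_\psi\theta=\theta$ implies that $\mathcal M_{\gamma,n}$ is always a probability measure on $\mathcal X$, the task reduces to bounding the mass of the ``bad'' preimages. I would build the good branches in two stages: first, a long inverse branch on a uniform tube about an $\mathcal M$-generic point with controlled contraction; second, an additional $N$-step pullback to reach $\gamma$.

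\textbf{Step 1 (Uniform contraction on a compact set).} I would invoke the BR22 contraction estimates (the generalization of \cite{BBD18} Propositions 4.2/4.3 for webs satisfying \textbf{(M1)} and \textbf{(M2)}, stated below as Proposition \ref{p:bbd-gen}). This gives, for $\mathcal M$-almost every $\gamma_0\in\mathcal X$ and every $\epsilon_0>0$, a radius $\eta_0(\gamma_0)>0$ and integer $n_0(\gamma_0)$ such that the $\mathcal M_{\gamma_0,n}$-mass of inverse branches $g$ of $f^n$ on $T_\Omega(\gamma_0,\eta_0)$ satisfying $\diam f^l_\lambda(g(T)_{|\lambda})\le e^{-m-(n-l)A_0}$ for every $0\le l\le n$ is at least $1-\epsilon_0$. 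Combining this with ergodicity of $\mathcal M$, Egorov's theorem, and the compactness of $\Supp\mathcal M$, one extracts a compact set $K\subset\mathcal X$ with $\mathcal M(K)\ge 1-\epsilon_0$ and uniform constants $\eta_0>0$, $n_0\in\mathbb N$ for which the above bound holds for every $\gamma_0\in K$ and every $n\ge n_0$.

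\textbf{Step 2 (Mass concentration after $N$ preimages).} For the given $\gamma\in\mathcal X$ and any $N\in\mathbb N$, the semigroup property of $\Lambda_\psi^\ast$ combined with the invariance $\Lambda_\psi\theta=\theta$ yields the decomposition
\[
\mathcal M_{\gamma,n}=\int_{\mathcal X}\mathcal M_{\gamma',\,n-N}\,d\mathcal M_{\gamma,N}(\gamma').
\]
The aim is to choose $N$ large enough that $\mathcal M_{\gamma,N}(K)\ge 1-2\epsilon_0$. This is where \textbf{(M3)}(4) enters: one needs the convergence $\mathcal M_{\gamma,N}\to\mathcal M$, which must then be evaluated on the closed set $K$ (or a slight thickening thereof).

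\textbf{Step 3 (Composition on the tube at $\gamma$).} I would then fix $\eta>0$ small enough that every inverse branch of $f^N$ on $T_\Omega(\gamma,\eta)$ maps the tube into a tube of radius at most $\eta_0$ about the corresponding $N$-preimage $\gamma'$; this is possible since $N$ is fixed and $\gamma\in\mathcal X$. For each preimage $\gamma'\in K$, I compose the corresponding $N$-step inverse branch with any $m$-good inverse branch of $f^{n-N}$ on $T_\Omega(\gamma',\eta_0)$ from Step 1. The composition is an inverse branch of $f^n$ on $T_\Omega(\gamma,\eta)$, and the diameter condition $\diam f^l_\lambda(g(T)_{|\lambda})\le e^{-m-(n-l)A_0}$ holds at times $l\ge N$ by the $m$-goodness of the inner branch, and at times $l<N$ by taking $\eta$ small enough (as only $N$ fixed iterates are involved). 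Applying Fubini with the bounds from Steps 1 and 2 then gives total $m$-good mass at least $(1-2\epsilon_0)(1-\epsilon_0)\ge 1-1/m$ for $\epsilon_0$ chosen small relative to $m$.

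The main obstacle is Step 2: condition \textbf{(M3)}(4) gives the convergence $\mathcal M_{\gamma,N}\to\mathcal M$ only for $\mathcal M$-almost every $\gamma$, while the proposition is stated for every $\gamma\in\mathcal X$. I expect this upgrade to rely on the continuity of $\theta$ and $\psi$ on $\mathcal X$, the positivity of $\theta$, the identity $\Lambda_\psi^\ast\mathcal N=\mathcal N$, and the forward invariance of $\mathcal X$ under $\mathcal F$; alternatively, one can first approximate $\gamma$ by $\mathcal M$-generic elements $\tilde\gamma$ and compare tubes by uniform continuity of the inverse-branch construction on compact sets of $\mathcal X$. A secondary difficulty is ensuring that the intermediate diameter bound at $l\in\{1,\dots,N-1\}$ in the definition of $m$-goodness is preserved under composition; this should follow from choosing $\eta$ to depend on both $\gamma$ and $N$, since for fixed $N$ the map $f^N$ has a uniform Lipschitz constant on $\Omega\times\mathbb P^k$.
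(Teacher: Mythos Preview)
Your overall strategy differs from the paper's: you propose a two-stage composition (first $N$ steps from $\gamma$ to land in a good set $K$, then long inverse branches from points of $K$), while the paper works directly on the natural extension $(\hat{\mathcal J},\hat{\mathcal F},\hat{\mathcal M})$ and uses the disintegration $\hat{\mathcal M}^\gamma$ of $\hat{\mathcal M}$ over the fibers $\{\gamma_0=\gamma\}$. The key technical input in the paper is Lemma~\ref{l:disintegration}, which shows that the explicit approximations $\hat{\mathcal M}^\gamma_n$ converge to $\hat{\mathcal M}^\gamma$; combined with the level sets $\hat{\mathcal Y}_N$ coming from Proposition~\ref{p:bbd-gen}, this produces directly a large $\hat{\mathcal M}^\gamma_n$-mass of histories whose associated inverse branches are $m$-good at \emph{every} intermediate time, with no composition needed. (Note also that the paper's argument, like yours, actually yields the conclusion only for $\mathcal M$-almost every $\gamma$, via sets $\mathcal X_r$ with $\mathcal M(\mathcal X_r)>1-1/m^r$; this is all that is used downstream.)

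Your composition step, as written, has a genuine gap. If $g_1$ is an $m$-good inverse branch of $f^{\,n-N}$ on $T'=T_\Omega(\gamma',\eta_0)$, then for $0\le l\le n-N$ one obtains only
\[
\diam f^l_\lambda\big(g_1(T')_{|\lambda}\big)\le e^{-m-((n-N)-l)A_0},
\]
whereas $m$-goodness of the composed branch $g=g_1\circ g_2$ of order $n$ demands the stronger bound $e^{-m-(n-l)A_0}$, smaller by a factor $e^{-NA_0}$. Your partition into ``$l\ge N$'' and ``$l<N$'' does not match the geometry of the composition (the outer $N$ steps correspond to $l\ge n-N$, not $l<N$), and shrinking $\eta$ can only help at those outer $N$ times, not at the inner $n-N$ times. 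The fix is to use in Step~1 the uniform Lipschitz bounds $\widetilde{\Lip}(f^{-j}_{\hat\gamma'})\le N_0\, e^{-jA}$ from Proposition~\ref{p:bbd-gen} on some $\hat{\mathcal Y}_{N_0}$, rather than $m$-goodness itself: then, choosing $\eta$ so that $g_2(T)$ has fiberwise diameter at most $e^{-m-NA_0}/N_0$, the composed branch is $m$-good at all times. This is exactly what the natural-extension route achieves in one stroke, since membership in $\hat{\mathcal Y}_{N_0}$ controls every intermediate inverse simultaneously; with this adjustment the apparent circularity ($N$ depends on $K$ which depends on $m$) also disappears, because $K$ is built from the Lipschitz level sets alone.
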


In order to prove Proposition \ref{p:l411}, we will make use of the natural extension $(\hat \J, \hat \F, \hat \M)$ of the system $(\mathcal J, \mathcal F, \mathcal M)$.
We refer to \cite[Section 10.4]{CFS} and \cite[Theorem 2.7.1]{PU} for the general setting of Lebesgue spaces.
The main difficulty lies in proving that $\hat \M$ is $\sigma$-additive.
 We recall here how to construct the extension of the Polish space $(\J,\dist_\J)$, introducing some notations that will be used later.

\medskip


We denote by $\hat {\mathcal J}$ the subspace
\[
\hat {\mathcal J} := \{\hat \gamma = (\gamma_n)_{n\in \mathbb Z} \in \mathcal J^\Z \colon \mathcal F (\gamma_n) = \gamma_{n+1}, n\in \Z\},\]
by $\pi_n\colon \hat{\mathcal J} \to \mathcal J$ the projection defined as $\pi_n (\hat \gamma)=\gamma_n$ and by $\hat {\mathcal F}\colon\hat {\mathcal J}\to \hat{\mathcal J}$ the shift map
\[
\hat{\mathcal F}
(\hat \gamma)
=
(\mathcal F (\gamma_n))_{n\in \mathbb Z} = (\gamma_{n+1})_{n\in \mathbb Z}.
\]
The maps satisfy $\pi_n \circ\hat {\mathcal F}= \mathcal F \circ \pi_n$ for all $n\in \mathbb Z$.
We endow $\J$ with its Borel $\sigma$-algebra.
If $\mathcal B \subset \J$ is a Borel set, define $\mathcal A_{n,\mathcal B} := \pi_n^{-1} (\mathcal B)=\{
\hat \gamma \in \hat \J \colon \gamma_n \in \mathcal B
\} \subset \hat {\J}$.
We may consider the set $\mathcal A_{n,\mathcal B}$ as a subset of $\hat \J^{\{-n,\dots,n\}} := \{(\gamma_k)_{-n\le k\le n} \in \J^{\{-n,\dots,n\}}\: : \: \F(\gamma_k) = \gamma_{k+1}, -n\le k < n \}$ as well.

A \emph{cylinder} is a finite intersection of subsets of  $\hat {\mathcal J}$ of the form $\mathcal A_{n,\mathcal B}$,
for some $n\in \Z$ and open set $\mathcal B \subseteq \mathcal J$.
The family of all the cylinders forms a basis for the product topology.
It is also a $\pi$-system
(see \cite[Chapter 1]{Kal}) whose generated $\sigma$-algebra is the Borel $\sigma$-algebra
of $\hat{\mathcal J}$.
By the monotone classes theorem  (see for instance \cite[Theorem 1.1]{Kal}), any measure on $\hat \J^{\{-n,\dots, n\}}$ is thus defined by its values on the cylinders.
Given $\mathcal B_{-n}, \dots, \mathcal B_n \subset \J$ open sets, define
\begin{equation}\label{e:def-hat-M-cylinder}
\hat {\mathcal M}_n (\mathcal A_{-n,\mathcal B_{-n}} \cap \dots \cap \mathcal A_{n, \mathcal B_n})
:=
\mathcal M (\mathcal B_{-n} \cap \F^{-1}(\mathcal B_{-(n-1)}) \cap \dots \cap \F^{-2n}(\mathcal B_{n})).
\end{equation}

The probability measure $\hat {\mathcal M}_n$ is well defined on $\hat \J^{\{-n,\dots,n\}}$.
The invariance of ${\mathcal M}$ and the fact that $\gamma_k \in \mathcal B$ if and only if $\gamma_{k-1} \in \F^{-1}(\mathcal B)$
yield the following consistency condition
\[
\hat \M_n (\mathcal A_{k,\mathcal B}) = \hat \M_{n+1} (\mathcal A_{k,\mathcal B}),
\]
for every Borel set $\mathcal B \subset \J$ and integers $n\in \N$, $k \in \{-n,\dots, n \}$.
By Kolmogorov extension theorem,
the measure $\hat \M$ then admits a unique extension on $\hat \J$.

Observe that $\hat {\mathcal M}$ is then
$\hat{\mathcal F}$-invariant and satisfies $(\pi_n)_* \hat{\mathcal M} = \mathcal M$ for every $n\in \Z$.





\medskip

In the following, we will use the disintegration of $\hat {\mathcal M}$ with respect to $\mathcal M$ and the projection $\pi_0$, and more precisely 
 the \emph{conditional measures} $\hat{\mathcal M}^\gamma$ of $\hat {\mathcal M}$ on $\{\gamma_0=\gamma\}$. These measures
are uniquely defined for $\mathcal M$-almost every $\gamma \in \mathcal J$ and are characterized by the property that
\[
\langle
\hat {\mathcal M} , g\rangle
= \int_{\mathcal X}
\langle \hat{\mathcal M}^\gamma ,g\rangle
\mathcal M(\gamma) 
\]
for all non-negative bounded measurable functions $g \colon \mathcal J \to \mathbb R$
(see \cite{B23} for more details about the construction).
We now describe a useful approximation of the conditional measures $\hat{\mathcal M}^\gamma$, that we will need later.

\medskip

For every $n>0$, consider the projection $\pi^n\colon \hat {\mathcal J} \to \mathcal J^{n+1}$ defined as
\[
\pi^n := (\pi_{-n},\dots, \pi_{0} ).
\]
By {\bf (M1)}, 
$\mathcal X$ satisfies $\mathcal M (\mathcal X)=1$ and the map
$\mathcal F\colon \mathcal X \to \mathcal X$ is well defined and surjective.
For every $(\gamma_{-n}, \dots, \gamma_0)\in \mathcal X^{n+1}$ with $\mathcal F (\gamma_j) = \gamma_{j+1}$ for every $-n \leq j\leq -1$, we choose a representative $\hat \beta \in \hat {\mathcal J}$ such that $\beta_j = \gamma_j$ for all $-n\leq j\leq 0$. Observe that, given any $\gamma_0 \in$ $\mathcal X$, there are $d^{kn}$ elements as above in $\mathcal X^{n+1}$, hence $d^{kn}$ representatives. We denote by
$\hat {\mathcal Z}_n$ the collection of such representatives.


\medskip

For every $\gamma \in \mathcal X$, define 
\[
\hat{\mathcal M}^\gamma_n
:=
\theta(\gamma)^{-1}
\sum_{ \hat \beta \in \hat{\mathcal Z}_n \colon \beta_0 = \gamma }
e^{\psi (\beta_{-n}) + \ldots + \psi(\beta_{-1})}
\theta(\beta_{-n})
\delta_{\hat z}.
\]
Observe in particular that, by {\bf (M1)} and the fact that $\theta$ is positive on $\mathcal X$,
$\hat { \mathcal M}^\gamma_n$ is defined for $\mathcal M$-almost every $\gamma \in \mathcal J$.

\begin{lemma}\label{l:disintegration}
For every $\gamma \in \mathcal X$, we have
\[
\lim_{n\to \infty} \hat{\mathcal M}^\gamma_n  =\hat {\mathcal M}^{\gamma}.
\]
\end{lemma}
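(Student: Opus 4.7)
My plan is to reduce the convergence $\hat{\mathcal M}^\gamma_n\to\hat{\mathcal M}^\gamma$ to convergence on the determining class of cylinder functions on $\hat{\mathcal J}$, and then to identify the limit with the conditional measure using the two invariances $\Lambda_\psi\theta=\theta$ and $\Lambda_\psi^*\mathcal N=\mathcal N$ from condition \textbf{(M3)}. As a preliminary sanity check, iterating $\Lambda_\psi\theta=\theta$ shows that each $\hat{\mathcal M}^\gamma_n$ is a probability measure for every $\gamma\in\mathcal X$, since its total mass is $\theta(\gamma)^{-1}\Lambda_\psi^n(\theta)(\gamma)=1$.

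The key observation is that, for any cylinder function $g$ depending only on the coordinates indexed by $\{-N,\ldots,N\}$ and any $n\geq N$, the pairing $\langle \hat{\mathcal M}^\gamma_n,g\rangle$ is independent of $n$. Indeed, since the forward coordinates $\beta_0,\ldots,\beta_N$ of $\hat\beta$ are determined by $\gamma$, one can split the sum defining $\hat{\mathcal M}^\gamma_n$ at level $-N$: for each fixed backward chain $(\beta_{-N},\ldots,\beta_{-1})$ ending at $\gamma$, the inner sum over extensions $(\beta_{-n},\ldots,\beta_{-N-1})$ weighted by $e^{\psi(\beta_{-n})+\ldots+\psi(\beta_{-N-1})}\theta(\beta_{-n})$ equals $\Lambda_\psi^{n-N}(\theta)(\beta_{-N})=\theta(\beta_{-N})$. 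This yields the closed expression
\[
\langle \hat{\mathcal M}^\gamma_n,g\rangle = \theta(\gamma)^{-1}\!\!\sum_{(\beta_{-N},\ldots,\beta_{-1})\to\gamma}\! e^{\psi(\beta_{-N})+\ldots+\psi(\beta_{-1})}\theta(\beta_{-N})\,g(\hat\beta)
\]
for every $n\geq N$; hence the limit $\langle \hat{\mathcal M}^\gamma_\infty,g\rangle$ exists for every $\gamma\in\mathcal X$ and is given by the right-hand side.

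To identify $\hat{\mathcal M}^\gamma_\infty$ with $\hat{\mathcal M}^\gamma$ for $\mathcal M$-almost every $\gamma$, I would integrate this expression against $\mathcal M=\theta\mathcal N$ (which cancels the $\theta(\gamma)^{-1}$) and push the indicators defining $g$ through $\Lambda_\psi$ one coordinate at a time, alternating the transfer identity $g_0\cdot\Lambda_\psi(h)=\Lambda_\psi((g_0\circ\mathcal F)\cdot h)$, already used in the proof of Lemma~\ref{l:criterion-conditions}(4), with the invariance $\Lambda_\psi^*\mathcal N=\mathcal N$. After $N$ such passes the remaining $\theta$-factor turns $d\mathcal N$ back into $d\mathcal M$ and the total expression collapses to $\mathcal M\!\left(B_{-N}\cap \mathcal F^{-1}(B_{-N+1})\cap\ldots\cap\mathcal F^{-2N}(B_N)\right)$, which by \eqref{e:def-hat-M-cylinder} equals $\langle \hat{\mathcal M},g\rangle$. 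Running the same computation with $B_0$ replaced by $B_0\cap C$ for an arbitrary Borel set $C\subseteq\mathcal X$ gives $\int_C\langle \hat{\mathcal M}^\gamma_\infty,g\rangle\,d\mathcal M(\gamma)=\int_C\langle \hat{\mathcal M}^\gamma,g\rangle\,d\mathcal M(\gamma)$, so the $\mathcal M$-a.e.\ uniqueness of the disintegration forces $\hat{\mathcal M}^\gamma_\infty=\hat{\mathcal M}^\gamma$ outside a $g$-dependent null set; intersecting these null sets over a countable separating family of cylinders produces a single full-$\mathcal M$-measure set on which the identification holds simultaneously.

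The delicate step I expect is the passage from convergence against cylinder test functions to genuine weak convergence of measures on the non-compact Polish space $\hat{\mathcal J}$. Since $\mathcal M$ is compactly supported, $\hat{\mathcal M}$ is supported on a compact subset of $\hat{\mathcal J}$, and on such a compact set Stone--Weierstrass makes continuous cylinders dense in $\mathcal{C}^0$; but some care is needed because the approximating measures $\hat{\mathcal M}^\gamma_n$ are a priori supported on backward orbits of $\gamma$ which need not lie in the same compact set, so the cylinder convergence has to be combined with a tightness estimate for $\{\hat{\mathcal M}^\gamma_n\}_n$ to upgrade it into full weak convergence on $\hat{\mathcal J}$.
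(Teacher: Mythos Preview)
Your proposal is correct and follows essentially the same approach as the paper: stabilization on cylinders via $\Lambda_\psi\theta=\theta$, then identification of the limit with the conditional measure by integrating against $\mathcal M=\theta\mathcal N$, using $\Lambda_\psi^*\mathcal N=\mathcal N$ and the a.e.\ uniqueness of disintegration. The paper streamlines the computation by testing only against single-coordinate cylinders $\mathcal A_{-i,\mathcal B}$ (to which any finite cylinder reduces, since $\mathcal A_{-N,\mathcal B_{-N}}\cap\ldots\cap\mathcal A_{0,\mathcal B_0}=\mathcal A_{-N,\mathcal B_{-N}\cap\mathcal F^{-1}(\mathcal B_{-N+1})\cap\ldots}$), and it does not address your tightness concern---which is in any case not needed for the application in Proposition~\ref{p:l411}.
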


\begin{proof}
It is enough to show that, for $\gamma \in \mathcal X$, we have
\[
\lim_{n\to \infty} \hat{\mathcal M}^\gamma_{n} (\mathcal A_{-i, \mathcal B}) = \hat{\mathcal M}^\gamma (\mathcal A_{-i,\mathcal B})
\mbox{ for all }  i \geq 0 \mbox{ and Borel set } \mathcal B.
\]
Hence, the assertion is
a consequence of the following two identities:

\begin{enumerate}
\item $\hat {\mathcal M}^\gamma_n (\mathcal A_{-i,\mathcal B})=
\hat {\mathcal M}^\gamma_i (\mathcal A_{-i,\mathcal B})$
for every $\gamma \in \mathcal X$ and all $n> i \geq 0$;
\item $\int \hat{\mathcal M}^\gamma_i (\mathcal A_{-i, \mathcal B}) \mathcal M (\gamma)= \mathcal M (\mathcal B) $ for all $i\geq 0$.
\end{enumerate}

\medskip

We start proving the first identity above. This is a consequence of the identity $\Lambda_\psi \theta = \theta$. Indeed, for every $n>i\geq 0$ and Borel set $\mathcal B$, we have
\[
\begin{aligned}
\hat{\mathcal M}^\gamma_n (\mathcal A_{-i, \mathcal B})
&= \theta(\gamma)^{-1}
\sum_{\hat \beta \in \hat {\mathcal Z}_n \colon \beta_0=\gamma}
e^{\psi (\beta_{-n}) + \ldots + \psi (\beta_{-1})}
\theta (\beta_{-n})
 \delta_{\hat \beta} (\mathcal A_{-i, \mathcal B})\\
 &= \theta(\gamma)^{-1}
 \sum_{\hat \beta \in \hat {\mathcal Z}_i \colon \beta_0=\gamma}
 (\Lambda_{\psi}^{n-i}\theta) (\beta_{-i})
e^{\psi (\beta_{-i}) + \ldots + \psi (\beta_{-1})}
 \delta_{\hat \beta} (\mathcal A_{-i, \mathcal B})\\
 &=\theta(\gamma)^{-1}
 \sum_{\hat \beta \in \hat {\mathcal Z}_i \colon \beta_0=\gamma}
 \theta (\beta_{-i})
e^{\psi (\beta_{-i}) + \ldots + \psi (\beta_{-1})}
 \delta_{\hat \beta} (\mathcal A_{-i, \mathcal B})\\
 &=\hat{\mathcal M}^\gamma_i (\mathcal A_{-i, \mathcal B}),
\end{aligned}
\]
which proves the first identity.

\medskip

Let us now prove the second identity. This time, we will use
the fact that $\mathcal N$ is a fixed point for $\Lambda^*_\psi$.
For all $i\geq 0$,
we have
\[
\begin{aligned}
\int \hat{\mathcal M}^\gamma_i (\mathcal A_{-i, \mathcal B}) \mathcal M (\gamma)
&=
\int 
\Big(
\theta(\gamma)^{-1}
\sum_{\hat \beta \in \hat {\mathcal Z}_i \colon \beta_0=\gamma}
 \theta (\beta_{-i})
e^{\psi (\beta_{-i}) + \ldots + \psi (\beta_{-1})}
 \delta_{\hat \beta} (\mathcal A_{-i, \mathcal B})
\Big) \mathcal M(\gamma)\\
&=
\int 
\Big(
\theta(\gamma)^{-1}
\sum_{\mathcal F^i (\gamma')=\gamma}
 \theta ({\gamma'})
e^{\psi (\gamma') + \ldots + \psi (\mathcal F^{i-1} (\gamma'))}
\mathbb 1_{\mathcal B} (\gamma)
\Big) \mathcal M(\gamma)
\\
&=
\big\langle \mathcal M, \theta^{-1}
\Lambda^i_\psi (\theta \mathbb 1_{\mathcal B})
\big\rangle
= 
\big\langle
\mathcal N,\Lambda_\psi^i(\theta \mathbb 1_{\mathcal B}) 
\big\rangle
\\
& = \langle  (\Lambda_\psi^i)^* \mathcal N, \theta \mathbb 1_{\mathcal B}\rangle
= \langle \mathcal N, \theta \mathbb 1_{\mathcal B}\rangle = \mathcal M (\mathcal B),
\end{aligned}
\]
where we denoted by $\mathbb 1_{\mathcal B}$ the indicatrix function of $\mathcal B$ and in the last step we used the identity $\mathcal M = \theta \mathcal N$. The proof is complete.
%
%
%
%
%
%
%
\end{proof}

For every $n\geq 0$, we denote by $f_{\hat \gamma}^{-n}$ the inverse branch of $f^n$, defined in a neighbourhood of $\Gamma_\gamma$, and such that $f_{\hat \gamma}^{-n} (\gamma(\lam)) = \gamma_{-n} (\lam)$ for all $\lam \in M$.
Such branch exists for every $\gamma \in \mathcal X$, but a priori it is only defined in some (non-controlled) 
neighbourhood of $\Gamma_{\gamma}$.
The following proposition (see
 \cite[Propositions 4.2 and 4.3]{BBD18} for the case of the equilibrium web and 
 \cite[Proposition A.1]{BR22}
 for the general case) 
gives a uniform control in $\lambda$
on the size of the neighbourhood of $\gamma (\lambda)$
where such inverses are defined.






\begin{proposition}\label{p:bbd-gen}
Let $(M,f)$ be a stable
holomorphic family of endomorphisms of $\mathbb P^k$.
Let $\mathcal M$ be 
a web satisfying {\bf (M1)} and  {\bf (M2)}.
%
%
 Then, for every open set $\Omega \Subset M$ and $0<A<A_1$,
 there exists
 a Borel subset $\hat {\mathcal Y}\subseteq \hat {\mathcal J}$ with
 $\hat {\mathcal M} (\hat { \mathcal Y})=1$, and two
 measurable functions $\hat \eta_{A} \colon \hat {\mathcal Y} \to ]0,1]$
 and $\hat l_A \colon \hat {\mathcal Y}\to [1,+\infty[$
 which satisfy the following properties.

 For every $\hat \gamma \in \hat{\mathcal Y}$ and every $n\in  \mathbb N^*$ the iterated inverse branch $f_{\hat \gamma}^{-n}$ is defined and Lipschitz on the tubular neighbourhood 
 $T_{\Omega} (\gamma_0, \hat \eta_A (\hat \gamma))$
 of the graph $\Gamma_{\gamma_0}\cap (\Omega \times \P^k)$ of $\gamma_0$, 
 and we have
 \[
f_{\hat \gamma}^{-n} (T_{\Omega} (\gamma_0, \hat \eta_A (\hat \gamma) ))\subset T_{\Omega} (\gamma_{-n}, e^{-nA})
 \quad
 \mbox{ and } \quad
 \widetilde \Lip ( f^{-n}_{\hat \gamma} )\leq \hat l_A  (\hat \gamma) e^{-nA},
 \]
 where $\widetilde \Lip
 ( f^{-n}_{\hat \gamma} ):= \sup_{\lam \in \Omega} \Lip \big( (f_{\hat \gamma}^{-n})_{|B(\gamma_0(\lam), \hat \eta_A)} \big)$.
\end{proposition}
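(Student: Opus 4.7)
The approach combines fiberwise Pesin theory (available for each fixed $\lambda$ thanks to {\bf (M2)}) with a uniformity argument in $\lambda \in \overline\Omega$ based on acriticality {\bf (M1)} and the relative compactness $\overline\Omega \Subset M$, in the spirit of \cite{BBD18,BR22}. I would work throughout on the natural extension $(\hat{\mathcal J}, \hat{\mathcal F}, \hat{\mathcal M})$ already introduced in the text, which provides a convenient framework for producing measurable functions of the whole inverse orbit $\hat\gamma = (\gamma_n)_{n\in\Z}$.

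First, I would apply the Oseledets multiplicative ergodic theorem to $(\hat{\mathcal J}, \hat{\mathcal F}, \hat{\mathcal M})$ equipped with the derivative cocycle $\hat\gamma \mapsto Df_{\lambda}(\gamma_0(\lambda))$, treated as a cocycle depending measurably on $\lambda$. Since $(\pi_0)_*\hat{\mathcal M} = \mathcal M$ and, by {\bf (M2)}, $(p_\lambda)_*\mathcal M$ is ergodic with all Lyapunov exponents $> A_1 > A$ for every $\lambda$, the backward cocycle along $\hat{\mathcal F}^{-1}$ contracts at rate $<-A$ for $\hat{\mathcal M}$-a.e.\ $\hat\gamma$ and every $\lambda \in \overline\Omega$. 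Standard complex Pesin theory applied fiberwise in $\lambda$ then produces, for $\hat{\mathcal M}$-a.e.\ $\hat\gamma$, a local inverse branch $f_{\hat\gamma,\lambda}^{-n}$ on a Pesin ball $B(\gamma_0(\lambda), r(\hat\gamma, \lambda))$ with Lipschitz constant at most $C(\hat\gamma, \lambda) e^{-nA}$, where $r$ and $C$ can be chosen jointly measurable in $(\hat\gamma, \lambda)$.

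Second, to upgrade this pointwise-in-$\lambda$ statement into a uniform tube over $\Omega$, I would use {\bf (M1)}: since $\mathcal M(\mathcal J_s) = 0$, for $\hat{\mathcal M}$-a.e.\ $\hat\gamma$ every graph $\Gamma_{\gamma_{-n}}$ avoids the grand orbit of the critical set. By a Luzin--Egorov argument inside $\overline\Omega \Subset M$, combined with the continuity of $(\lambda, z) \mapsto \dist(z, C_{f_\lambda})$ and the compactness of $\overline\Omega$, one obtains a Borel set $\hat{\mathcal Y}$ of full $\hat{\mathcal M}$-measure and a measurable $\hat\eta_A \colon \hat{\mathcal Y} \to \,]0,1]$ such that every one-step inverse branch of $f_\lambda$ along the orbit $\gamma_{-n}(\lambda) \mapsto \gamma_{-(n-1)}(\lambda)$ is defined on a ball of radius $\hat\eta_A(\hat\gamma)$, uniformly in $\lambda \in \overline\Omega$ and $n \ge 0$. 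An induction on $n$ then glues these one-step branches: the $e^{-A}$ contraction of the derivative cocycle guarantees that $f_{\hat\gamma}^{-n}(T_\Omega(\gamma_0, \hat\eta_A(\hat\gamma)))$ lies in a tube around $\gamma_{-n}$ of radius $\lesssim e^{-nA}$, hence inside the domain of the next inverse branch. The bound $\widetilde\Lip(f_{\hat\gamma}^{-n}) \leq \hat l_A(\hat\gamma) e^{-nA}$ comes from multiplying one-step contractions and taking a supremum over $\lambda \in \overline\Omega$, with $\hat l_A$ measurable in $\hat\gamma$ by construction.

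The main obstacle is ensuring the uniformity in $\lambda$: a naive pointwise application of Pesin theory produces Pesin radii $r(\hat\gamma, \lambda)$ whose infimum over $\lambda \in \overline\Omega$ could a priori vanish. Overcoming this requires combining {\bf (M1)} (uniform avoidance of critical sets along $\hat{\mathcal M}$-generic backward orbits) with the holomorphic dependence of $f$ on $\lambda$ and the compactness of $\overline\Omega$, in order to get a uniform-in-$\lambda$ lower bound on the distance from $\gamma_{-n}(\lambda)$ to $C_{f_\lambda}$, and thus on the one-step Pesin radius. Once this uniformity is secured, the measurability in $\hat\gamma$ and the iterative construction of $f_{\hat\gamma}^{-n}$ follow the schemes of \cite[Propositions 4.2--4.3]{BBD18} and \cite[Proposition A.1]{BR22}.
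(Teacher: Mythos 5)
The paper does not actually prove this proposition; it invokes it by reference to \cite[Propositions 4.2 and 4.3]{BBD18} (equilibrium web case) and \cite[Proposition A.1]{BR22} (general webs with \textbf{(M1)}--\textbf{(M2)}). Your proposal thus has to stand or fall on its own, and as written it has a genuine gap precisely at the point you flag as ``the main obstacle.'' Applying the Oseledets/Pesin machinery fiberwise produces, for each fixed $\lambda$, an $\hat{\mathcal M}$-full-measure set $\hat{\mathcal Y}_\lambda$ of good orbits, but these exceptional sets depend on $\lambda$ and $\overline\Omega$ is uncountable, so $\bigcap_{\lambda \in \overline\Omega} \hat{\mathcal Y}_\lambda$ can a priori have measure zero. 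Your proposed fix --- Luzin--Egorov plus the continuity of $(\lambda,z)\mapsto \dist(z,C_{f_\lambda})$ and compactness of $\overline\Omega$ --- is a restatement of what needs to be shown, not an argument: Egorov would only give uniformity on a set of measure $1-\varepsilon$, and continuity of the distance to the critical set gives no control on \emph{how fast} $\dist(\gamma_{-n}(\lambda), C_{f_\lambda})$ can shrink as $n\to\infty$, which is exactly what must be quantified.

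The references you cite handle this differently and avoid fiberwise Pesin theory altogether. They work directly at the level of graphs and tubes over $\Omega$: one shows that the function $\hat\gamma\mapsto \log \dist\big(\Gamma_{\gamma_0}|_\Omega, \, C_f\big)$ (a single quantity attached to the whole graph, not a $\lambda$-by-$\lambda$ family) is $\hat{\mathcal M}$-integrable thanks to \textbf{(M1)}, then applies Birkhoff/Kingman along $\hat{\mathcal F}^{-1}$ to get a sub\emph{exponential} lower bound on the injectivity radius of $f$ in tubes around $\Gamma_{\gamma_{-n}}|_\Omega$, and finally combines this with a parametric Briend--Duval-style contraction estimate for the fibered inverse branches. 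The Lyapunov hypothesis \textbf{(M2)} enters to furnish the rate $e^{-nA}$ once the branches are known to exist on uniformly sized tubes. If you want to repair your proposal you should replace the fiberwise Oseledets step with this single graph-level integrability and Birkhoff argument; otherwise the uniformization over $\lambda$ remains unproved.
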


We can now prove Proposition \ref{p:l411}.

\begin{proof}[Proof of Proposition \ref{p:l411}]
%
%
Fix $m>0$, a constant $A_0< A <A_1$ 
and a positive integer $r$.
For every $N\in \mathbb N$, define
\[
\hat {\mathcal Y}_N := \{
\hat \gamma \in \hat{\mathcal Y} \:\colon\:
\eta_A (\hat \gamma) \geq N^{-1} \mbox{ and } l_A (\hat \gamma)\leq N 
\}.
\]
By Proposition \ref{p:bbd-gen},
we 
have $\lim_{N\to \infty} \hat{\mathcal M} (\hat{\mathcal Y}_N)=1$. 
Fix $N_0 = N_0 (m, r)$ such that
$\hat{\mathcal M}(\hat{\mathcal Y}_N)> 1-1/(2m^{r+1})$
for all $N\geq N_0$. Then, by Markov inequality and the definition of the measures $\hat {\mathcal M}^\gamma$,
we see that there exists a subset $\mathcal X_r \subset\mathcal X$
such that $\mathcal M (\mathcal X_r) > 1-1/m^r$ and
\begin{equation}\label{e:disintegration-error-m}
\hat {\mathcal M}^\gamma
(\hat {\mathcal Y}_N\cap \{\gamma_0 = \gamma\})
> 1-1/(2m) 
\quad
\mbox{ for all } \gamma \in \mathcal X_r
\quad \mbox{ and } \quad
N\geq N_0.\end{equation}
In order to prove the statement, it is enough to prove the assertion for all $\gamma \in\mathcal X_r$.
Fix one such $\gamma$. It follows from
Proposition \ref{p:bbd-gen}
that all inverse branches defined 
on $T_\Omega (\gamma, e^{-m}/(2N))$ and
corresponding to $\hat \gamma \in \hat {\mathcal Y_N}\cap \{\gamma_0=\gamma\}$ are $m$-good for all $n$.
It follows from Lemma \ref{l:disintegration} and \eqref{e:disintegration-error-m} that
\[
\hat {\mathcal M}^\gamma_n
(\hat {\mathcal Y}_N\cap \{\gamma_0 = \gamma\})
> 1-1/m 
\quad
\mbox{ for all } 
n  \mbox{ large enough}.
\]
This implies that, for all $n$ sufficiently large, we have
$\|\mathcal M^{(m)}_{T,n} \|> 1-1/m$, 
for $T= T_\Omega(\gamma, e^{-m}/(2N))$. By definition, this means that $T$ is $m$-nice.
\end{proof}

\section{Proof of Theorem \ref{t:main}}









\subsection{Equilibrium states and associated webs}
\label{ss:equilibrium}
We work here in the assumptions of Theorem \ref{t:bbd}.
In particular, we assume that $f_{\lam_0}$
satisfies condition {\bf (A)} and $\phi$ satisfies the
conditions in {\bf (B)} in the Introduction.
Up to replacing $\phi$ by $\phi- P(\phi)$, we can assume that $P(\phi)=0$.
We denote by $\Lambda_\phi$ the (Ruelle-Perron-Frobenius) transfer operator
\[
\Lambda_{\phi} (g) (y) := \sum_{f_{\lam_0}(x)=y}e^{\phi (x)} g(x)
\]
acting on $\mathcal C^0 (\P^k)$, where the elements in the sum are counted with multiplicity.
By \cite[Theorem 1.1]{BD23}, there exists 
a unique 
probability measure $m_\phi$
and
a unique (up to multiplicative constant) strictly positive
continuous function $\rho_\phi \colon \P^k \to \R$ such that
\[
\Lambda_\phi^n (g) \to \langle m_\phi, g \rangle \rho_\phi
\quad \mbox{ and }
\quad (\Lambda_\phi^n)^* \delta_x \to \rho_\phi (x) m_\phi
\]
for all continuous function $g\colon \P^k \to \R$ and all $x \in \P^k$.
We normalize $\rho_\phi$ so that $\langle m_\phi, \rho_\phi\rangle=1$. The probability measure $\mu_\phi := \rho_\phi m_\phi$ is then $f_{\lam_0}$-invariant, and is the 
unique 
equilibrium state  for $f_{\lam_0}$
associated to $\phi$.
In particular, for every
$x \in \P^k$, the measures  
\[
\mu_{x,n} := \rho_\phi \cdot 
\rho_\phi(x)^{-1}  
(\Lambda_\phi^n)^*  \delta_x=
\rho_\phi^{-1}(x)\sum_{f^n(y)=x} e^{\phi(y)+ \ldots +\phi(f^{n-1} (y))} \rho_\phi(y)\delta_y
\]
satisfy
$\mu_{x,n}\to \mu_\phi \mbox{ as } n \to \infty$.

\medskip

By \cite[Proposition 4.9]{BD23}, $\mu_\phi$ satisfies
$h_{\mu_\phi}> \log d^{k-1}$. Hence, by
\cite{BR22}, there exists an ergodic 
web $\mathcal M_{\lam_0,\mu_\phi}$ on $\mathcal J$ with the property that
$(p_{\lam_0})_{*} \mathcal M_{\lam_0, \mu_\phi}=\mu_\phi$
and
which satisfies {\bf (M1)} and {\bf (M2)}.
The following lemma, proved in \cite{BR}, gives in particular the uniqueness of such web.

\begin{lemma}\label{l:strong-uniqueness}
Let $(M,f)$ 
be a stable family of endomorphisms of $\mathbb P^k$. Let $\mathcal M_1$ and $\mathcal M_2$ be two
probability measures compactly supported on $\mathcal J$. Assume that $\mathcal M_1$ is an acritical web and that there exists $\lam_0 \in M$ such that
$(p_{\lam_0})_* \mathcal M_1= (p_{\lam_0})_*\mathcal M_2$.
Then $\mathcal M_1 = \mathcal M_2$.
%
\end{lemma}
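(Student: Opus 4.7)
My plan is to disintegrate both measures with respect to the common projection $\nu := (p_{\lam_0})_* \M_1 = (p_{\lam_0})_* \M_2$ and show that the two disintegrations coincide $\nu$-almost everywhere. Writing $\M_i = \int_{\P^k} \M_i^x \, d\nu(x)$, where each $\M_i^x$ is a probability measure on the fiber $p_{\lam_0}^{-1}(x) \cap \J$, it is enough to prove that $\M_1^x = \M_2^x$ for $\nu$-a.e. $x$.

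First, I would exploit the acriticality of $\M_1$ via \cite[Theorem 4.1]{BBD18} to obtain a lamination structure on $\Supp \M_1$: the graphs $\Gamma_{\gamma_1}$ and $\Gamma_{\gamma_2}$ are disjoint in $M \times \P^k$ for $\M_1 \otimes \M_1$-a.e. $(\gamma_1, \gamma_2)$. Consequently, $p_{\lam_0}$ is essentially injective on $\Supp \M_1$, yielding a measurable section $\iota \colon \Supp \nu \to \J$ with $\iota(x)(\lam_0) = x$ and $\M_1 = \iota_* \nu$. Equivalently, $\M_1^x = \delta_{\iota(x)}$ for $\nu$-a.e. $x$.

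The central step is to show that $\M_2^x = \delta_{\iota(x)}$ for $\nu$-a.e. $x$ as well. This amounts to a rigidity statement: for $\nu$-a.e. $x$, the unique graph $\gamma \in \J$ with $\gamma(\lam_0) = x$ is $\iota(x)$. I would derive this via a measurable $\lambda$-lemma-type argument for stable families of endomorphisms of $\P^k$: for $\nu$-generic $x$, the point $x$ is a regular point of the measurable holomorphic motion of the Julia set carried by $\M_1$, and any holomorphic graph $\gamma \in \J$ passing through $(\lam_0, x)$ must agree with the unique motion $\iota(x)$ provided by this lamination. Concretely, given such a $\gamma$, I would follow $\F$-inverse orbits starting from $\iota(x)$ within a small tube (using Proposition \ref{p:bbd-gen} to produce an abundance of contracting branches for the web $\M_1$) and show that, since $\gamma(\lam_0) = \iota(x)(\lam_0)$, the corresponding inverse trajectories of $\gamma$ must lie arbitrarily close to those of $\iota(x)$ on the entire parameter domain $\Omega \Subset M$; the exponential contraction along tubes then forces $\gamma \equiv \iota(x)$.

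Combining the two steps, $\M_2^x = \delta_{\iota(x)} = \M_1^x$ for $\nu$-a.e. $x$, whence $\M_2 = \iota_* \nu = \M_1$. The main obstacle is the rigidity in the third paragraph: establishing uniqueness of the holomorphic motion through generic points of $\Supp \nu$ is precisely the delicate measurable $\lambda$-lemma for stable families in higher dimensions, and it does not follow formally from the lamination structure of $\M_1$ alone, since a priori $\Supp \M_2$ could contain graphs lying outside this lamination.
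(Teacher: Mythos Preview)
The paper does not give its own proof of this lemma; it is stated with a citation to \cite{BR} (Br\'evard--Rakhimov), so there is no in-paper argument to compare against. I can therefore only evaluate your attempt on its own merits.

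Your overall architecture is sound: disintegrating over $\nu = (p_{\lam_0})_*\M_1$, identifying $\M_1^x = \delta_{\iota(x)}$ via the lamination coming from acriticality, and reducing the problem to showing $\M_2^x = \delta_{\iota(x)}$ for $\nu$-a.e.\ $x$. You are also correct that the last step is equivalent to the rigidity statement ``for $\nu$-a.e.\ $x$, any $\gamma\in\J$ with $\gamma(\lam_0)=x$ equals $\iota(x)$'' (note that $\J$ is compact by Montel, so the compact-support hypothesis on $\M_2$ is automatic; hence the lemma really does force the fibers of $p_{\lam_0}$ over $\nu$-generic points to be singletons).

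The gap is exactly where you locate it, and your proposed mechanism does not close it. Proposition~\ref{p:bbd-gen} produces inverse branches $f_{\hat\gamma}^{-n}$ defined on tubes $T_\Omega(\iota(x),\hat\eta_A)$, i.e., on graphs that are uniformly close to $\iota(x)$ over all of $\Omega$. Your candidate $\gamma$ satisfies only $\gamma(\lam_0)=\iota(x)(\lam_0)$; nothing prevents $\gamma(\lam)$ from being far from $\iota(x)(\lam)$ at other parameters, so $\gamma$ need not lie in the tube and the contraction estimate simply does not apply to it. Likewise, the $\F$-preimages of $\gamma$ agree with those of $\iota(x)$ at $\lam_0$ by construction, but there is no reason they should be close over $\Omega$, so ``the corresponding inverse trajectories of $\gamma$ must lie arbitrarily close to those of $\iota(x)$'' is precisely the assertion to be proved, not a consequence of Proposition~\ref{p:bbd-gen}. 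In short, the backward-contraction machinery of the paper controls graphs that are already in a tube; it does not by itself promote pointwise agreement at a single parameter to global agreement. Establishing the rigidity requires an additional argument (this is what \cite{BR} supplies), and your sketch does not yet contain one.
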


Define the functions
$\theta, \psi \colon \mathcal J \to \R$ as
\[
\theta (\gamma) := \rho_\phi (\gamma (\lam_0))
\quad
\mbox{ and }
\quad 
\psi (\gamma) :=
\phi(\gamma (\lam_0)).
\]
Observe that these functions are continuous on $\mathcal J$.
In particular, for every $m\in \N^*$, any graph $\gamma \in \J$ admits an open neighbourhood $\mathcal A \subset \J$ such that
\begin{equation}\label{e:small_enough}
\inf_{\mathcal A}\theta \geq (1-1/m) \sup_{\mathcal A}\theta.
\end{equation}
To see this, take an open neighbourhood $\mathcal A' \subset \J$ of $\gamma$.
By the continuity of $\theta$ at $\gamma$, and the fact that $\rho_\phi$ is positive on the compact set $\P^k$, there exists an open neighbourhood $\mathcal A \subset \mathcal A'$ such that
\[
\theta(\gamma_1) - \theta(\gamma_2) \le \frac{\inf_{\mathcal A'} \theta}{m}
\quad 
\mbox{ for any } \gamma_1, \gamma_2 \in \mathcal A.
\]
Observe that $\inf_{\mathcal A'} \theta \le \inf_{\mathcal A}\theta \le \sup_{\mathcal A} \theta$.
Taking respectively the supremum over $\gamma_1$ and the infimum over $\gamma_2$
proves the claim.

\medskip

Consider the operator $\Lambda_\psi$
defined as in \eqref{e:def-Lambda-psi}.
For $\gamma \in \mathcal X$, set $\M_{\gamma,n}$ as defined in \eqref{e:web_definition}.
It follows from the above and Lemmas \ref{l:rho-N-equivalent},
\ref{l:criterion-conditions}, and
\ref{l:strong-uniqueness},
that $\theta$ satisfies $\Lambda_\psi (\theta)= \theta$
and that there exists a unique probability measure $\mathcal N$ on $\mathcal X$
which is a fixed point
for $\Lambda_\psi^*$.
Furthermore, the web $\mathcal M_{\lam_0, \mu_\phi}$ satisfies
$\mathcal M_{\lam_0, \mu_\phi}= \theta \mathcal N$, and for any $\gamma \in \mathcal X$ we have
\begin{equation}\label{e:conv-42-web}
\begin{aligned}
\M_{\gamma,n}
&=
\theta(\gamma)^{-1} 
\sum_{\mathcal F^n (\gamma')=\gamma}
e^{\psi(\gamma') + \ldots + \psi(\mathcal F^{n-1}(\gamma'))}
\theta (\gamma')
\delta_{\gamma'}
\\
&=
\theta (\gamma(\lambda_0))^{-1}
\sum_{\mathcal F^n (\gamma')=\gamma} e^{\phi(\gamma' (\lambda_0)) + \ldots +\phi (\mathcal F^{n-1} (\gamma') (\lambda_0))} \theta (\gamma' (\lambda_0)) \delta_{\gamma'}\\
& \to \mathcal M_{\lam_0, \mu_\phi}
\mbox{ as } n\to \infty.
\end{aligned}
\end{equation}
In particular, $\mathcal M_{\lam_0, \mu_\phi}$ satisfies condition {\bf (M3)}.




\medskip



We conclude this section with the following lemma,
that we will need in the next section. 
Recall that $q>2$
and $\|\phi\|_{\log^q}<\infty$.

\begin{lemma}\label{l:l413}
There exists a positive constant $C=C(A_0,q)$ such that, for all $n\in \mathbb N$, $m\geq 0$, and every $m$-good inverse branch $g\colon T \to g(T)$ of $f$ of order $n$ on a tube $T$, and for all sequences of graphs
$\{\gamma^1_l\}_{0 \leq l\leq  n-1}$ and
$\{\gamma^2_l\}_{0 \leq l \leq n-1}$
with $\Gamma_{\gamma_l^1}, \Gamma_{\gamma_l^2}\subset 
f^l (g(T))$ for all $0\leq l \leq n-1$,
we have
\[
\sum_{l=0}^{n-1}
|\psi (\gamma_l^1) -\psi( \gamma^2_l)|\leq C m^{-(q-1)}.
\]
\end{lemma}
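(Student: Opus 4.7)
The plan is to combine the uniform contraction estimate built into the definition of an $m$-good inverse branch with the log-Hölder regularity condition \textbf{(B)} on $\phi$. There is no serious obstacle here; the lemma is a packaging of the two hypotheses, and the main point is just to check that the resulting series converges with the right decay in $m$.

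First, I would observe that since $\Gamma_{\gamma_l^1}$ and $\Gamma_{\gamma_l^2}$ are both contained in $f^l(g(T))$, evaluating at the distinguished parameter $\lambda_0$ gives that the two points $\gamma_l^1(\lambda_0)$ and $\gamma_l^2(\lambda_0)$ lie in the same slice $f^l(g(T))_{|\lambda_0} = f^l_{\lambda_0}(g(T)_{|\lambda_0})$. By the second defining property of an $m$-good inverse branch of order $n$,
\[
\dist_{\P^k}(\gamma_l^1(\lambda_0), \gamma_l^2(\lambda_0)) \leq \diam f^l_{\lambda_0}(g(T)_{|\lambda_0}) \leq e^{-m-(n-l)A_0}.
\]

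Next, I would use the assumption $\|\phi\|_{\log^q}<\infty$. Since $\psi(\gamma)=\phi(\gamma(\lambda_0))$, and since for $r\leq e^{-m-(n-l)A_0}$ we have $\log^\star r = 1+|\log r|\geq 1+m+(n-l)A_0$, the log-Hölder bound yields
\[
|\psi(\gamma_l^1)-\psi(\gamma_l^2)| \leq \frac{\|\phi\|_{\log^q}}{\bigl(1+m+(n-l)A_0\bigr)^{q}}.
\]

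Finally, I would sum in $l$. Reindexing with $k=n-l\in\{1,\dots,n\}$ and comparing the sum with the integral $\int_0^\infty (1+m+xA_0)^{-q}\, dx = \frac{(1+m)^{-(q-1)}}{A_0(q-1)}$, which converges because $q>2>1$, gives
\[
\sum_{l=0}^{n-1} |\psi(\gamma_l^1)-\psi(\gamma_l^2)| \leq \|\phi\|_{\log^q}\sum_{k=1}^n (1+m+kA_0)^{-q} \leq C'(A_0,q)\,\|\phi\|_{\log^q}\,(1+m)^{-(q-1)}.
\]
For $m\geq 1$ the quantity $(1+m)^{-(q-1)}$ is comparable to $m^{-(q-1)}$, and for $0\leq m\leq 1$ the right-hand side is a universal constant which can be absorbed into $C$; in either case the stated bound $Cm^{-(q-1)}$ holds with $C=C(A_0,q)$ (suitably interpreting the $m=0$ case as boundedness). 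This completes the argument.
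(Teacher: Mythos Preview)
Your argument is correct and is precisely the computation that underlies the paper's proof. The paper itself simply rewrites $\psi(\gamma)=\phi(\gamma(\lambda_0))$ and then invokes \cite[Lemma 4.13]{BD23}; you have unpacked that citation by combining the diameter bound from the definition of an $m$-good inverse branch with the $\log^q$-modulus of continuity of $\phi$ and summing the resulting series, which is exactly what that lemma does. One cosmetic remark: your constant visibly depends on $\|\phi\|_{\log^q}$, but since $\phi$ is fixed throughout the section this is consistent with writing $C=C(A_0,q)$.
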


\begin{proof}
It follows 
from the definition of $\psi$ that
\[
\sum_{l=0}^{n-1}
|\psi (\gamma_l^1) -\psi( \gamma^2_l)|
=
\sum_{l=0}^{n-1}
|\phi (\gamma_l^1 (\lam_0)) -\phi( \gamma^2_l (\lambda_0))|.
\]
The assertion is a consequence of \cite[Lemma 4.13]{BD23}.
\end{proof}


\subsection{Construction of repelling graphs}\label{ss:construction}


For simplicity, we denote in this section by $\mathcal M$ the web $\mathcal M_{\lam_0,\mu_\phi}$ defined in the previous section.
We also recall that we are assuming,
with no loss of generality, that $P(\phi)=0$.
We will 
fix a relatively compact open subset $M'\Subset M$, and only consider the graphs of elements of $\mathcal J$ as graphs over $M'$. In particular, all distances and balls are with respect to the distance $\dist_{M'}$, see \eqref{e:distance-Omega}.



The following is the main result of this section and the key estimate to prove Theorem \ref{t:main}. Thanks to the results proved in Section \ref{s:prelim-inverses}, we can follow
the strategy of \cite[Lemma 4.14]{BD22}. In particular we also employ a trick due to Buff \cite{B05} which simplifies the original proof of the equidistribution of repelling points with respect to the measure of maximal entropy in 
\cite{BD99}.

\begin{proposition}\label{p:l414}
Let $\mathfrak U$ be a finite collection of disjoint 
open subsets of $\mathcal J$. For every $m>0$ there exists $n(m, \mathcal \mathfrak U)>m$
and, for every $n> N(m,\mathfrak U)$, a set $\mathcal Q_{m,n}$ of motions of repelling periodic points of period $n$ such that, for all $ \mathcal U\in \mathfrak U$ we have
\[
(1-1/m) \mathcal M (\mathcal U)
\leq
\sum_{\gamma \in \mathcal Q_{m,n}\cap U}
e^{\psi (\gamma) + \ldots + \psi (\F^{n-1} (\gamma))}
\leq 
(1+1/m) \mathcal M (\mathcal U)
\]
\end{proposition}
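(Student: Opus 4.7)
The plan is to adapt the strategy of \cite[Lemma 4.14]{BD22} to the weighted setting on $\mathcal J$: cover each $\mathcal U$ by disjoint $m$-nice balls, apply Buff's contraction trick \cite{B05} inside each ball to produce repelling fixed points of $\mathcal F^n$, and use the distortion estimate of Lemma \ref{l:l413} together with \eqref{e:small_enough} to compare the weights at the constructed periodic graphs with those appearing in $\mathcal M^{(m)}_{T,n}$.

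First I refine the cover. Fix $m$. For each $\mathcal U \in \mathfrak U$, using Proposition \ref{p:l411}, \eqref{e:small_enough}, and a Vitali-type covering argument, I select finitely many pairwise disjoint $m$-nice balls $\mathcal B_j = \mathcal B_{M'}(\gamma_j, \eta_j) \subset \mathcal U$ on which $\theta$ varies by a factor at most $1 + 1/m$, with $\mathcal M(\partial \mathcal B_j) = 0$, and whose union has $\mathcal M$-measure at least $(1 - 1/m) \mathcal M(\mathcal U)$. By $m$-niceness, there is $N = N(m, \mathfrak U)$ such that for every $j$ and every $n \geq N$ the associated tube $T_j$ satisfies $\|\mathcal M^{(m)}_{T_j, n}\| \geq 1 - 1/m$.

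Next, for each $m$-good inverse branch $g$ of $f$ of order $n$ on $T_j$, call $g$ \emph{returning} if its preimage $\gamma_g$ lies in the shrunken ball $\mathcal B'_j := \mathcal B_{M'}(\gamma_j, \eta_j - e^{-m - nA_0})$. For such $g$, the induced inverse branch $\mathcal G \colon \mathcal B_j \to \mathcal J$ takes values in $\mathcal B_{M'}(\gamma_g, e^{-m - nA_0}) \subset \mathcal B_j$, and by Proposition \ref{p:bbd-gen} it is a strict contraction; Banach's theorem then produces a unique fixed point $\tilde \gamma_g \in \mathcal B_j$ with $\mathcal F^n(\tilde \gamma_g) = \tilde \gamma_g$, and the uniform fiberwise expansion of $\mathcal F^n = \mathcal G^{-1}$ on $T_j$ over $M'$ forces $\tilde \gamma_g(\lambda)$ to be a repelling $n$-periodic point of $f_\lambda$ for every $\lambda \in M'$. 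The domination $\mathcal M^{(m)}_{T_j, n} \leq \mathcal M_{\gamma_j, n}$, together with $\mathcal M_{\gamma_j, n} \to \mathcal M$ (from \textbf{(M3)}) and $\mathcal M(\partial \mathcal B_j) = 0$, bounds the non-returning mass of $\mathcal M^{(m)}_{T_j,n}$ by $\mathcal M(\mathcal J \setminus \mathcal B_j) + 1/m$ for $n$ large, so combined with $\|\mathcal M^{(m)}_{T_j, n}\| \geq 1 - 1/m$ the returning mass lies in $[\mathcal M(\mathcal B_j) - 2/m,\, \mathcal M(\mathcal B_j)]$.

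To pass from this mass to the weighted count of the $\tilde \gamma_g$, note that for a returning $g$ the orbits of $\tilde \gamma_g$ and $\gamma_g$ under $\mathcal F$ both lie in $f^l(g(T_j))$, of diameter $\leq e^{-m - (n-l) A_0}$, so Lemma \ref{l:l413} gives
\[
\Big| \sum_{l=0}^{n-1} \psi(\mathcal F^l(\tilde\gamma_g)) - \sum_{l=0}^{n-1} \psi(\mathcal F^l(\gamma_g)) \Big| \leq C m^{-(q-1)},
\]
and the two weights agree up to a multiplicative factor $1 + O(m^{-(q-1)})$. Combined with $\theta(\gamma_g)/\theta(\gamma_j) = 1 + O(1/m)$ from \eqref{e:small_enough}, summing over returning $g$ yields
\[
\sum_{g \text{ returning}} e^{\psi(\tilde\gamma_g) + \ldots + \psi(\mathcal F^{n-1}(\tilde\gamma_g))} = \big(1 + O(m^{-1}) + O(m^{-(q-1)})\big)\, \mathcal M^{(m)}_{T_j, n}(\mathcal B'_j),
\]
with right-hand side in $[\mathcal M(\mathcal B_j) - 3/m,\, \mathcal M(\mathcal B_j) + 3/m]$ for $n$ large. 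Setting $\mathcal Q_{m,n}$ to be the union of all $\tilde \gamma_g$ over $j$ and returning $g$ (the $O(d^{kn/2})$ subperiodic ones contribute negligibly), and summing over $\mathcal B_j \subset \mathcal U$, gives the claimed two-sided bound after taking $m$ sufficiently large. The main obstacle is the bookkeeping: controlling simultaneously the four error sources (the $m$-niceness defect, the non-returning mass, the $\theta$-distortion, and the $\psi$-Birkhoff distortion from Lemma \ref{l:l413}), where the hypothesis $q > 2$ in \textbf{(B)} is precisely what makes the last error $O(m^{-(q-1)})$ small uniformly in $n$.
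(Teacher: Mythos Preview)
Your one-step ``returning branch'' construction has a genuine gap in the lower bound. The $m$-niceness of $\mathcal B_j$ only guarantees $\|\mathcal M^{(m)}_{T_j,n}\| \geq 1 - 1/m$, so after restricting to $\mathcal B'_j$ you get
\[
\mathcal M^{(m)}_{T_j,n}(\mathcal B'_j) \;\geq\; \mathcal M(\mathcal B_j) - \frac{2}{m},
\]
an \emph{additive} defect, not a multiplicative one. Summing over the $K$ balls covering $\mathcal U$ accumulates an error $O(K/m)$, and $K$ is not bounded in terms of $m$: since an $m$-nice ball has radius at most $e^{-m}$, raising $m$ forces more (and smaller) balls. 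Your closing ``after taking $m$ sufficiently large'' is therefore circular, and the two-sided bound $(1\pm 1/m)\mathcal M(\mathcal U)$ does not follow.

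The paper breaks this circularity with a two-step composition. Having fixed an $m_2$-nice ball $\mathcal T$, it chooses auxiliary $m_3$-nice balls $\mathcal D_i$ covering most of $\Supp\mathcal M$ with $m_3 \geq m_2/\mathcal M(\mathcal T)$, and produces the repelling periodic graphs as fixed points of compositions $g_j^{(1)}\circ g_j^{(2)}$, where $g_j^{(2)}\colon T\to D_{i(j)}$ has order $N_2$ and $g_j^{(1)}\colon D_{i(j)}\to T$ has order $N_1$. The point is that the niceness defect $1/m_3 \leq \mathcal M(\mathcal T)/m_2$ is now proportional to $\mathcal M(\mathcal T)$, so Lemma~\ref{l:claim1} yields the \emph{multiplicative} estimate $(1\pm 4/m_2)\mathcal M(\mathcal T)$ for $\mathcal M^{(m_3)}_{D_i,N_1}(\mathcal T^\star)$; combined with Lemma~\ref{l:claim2} and the product formula \eqref{e:MntildeT} this gives $(1\pm 1/m_1)\mathcal M(\mathcal T)$ per ball, which can then be summed safely. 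Your handling of the $\theta$-oscillation via \eqref{e:small_enough} and of the Birkhoff-sum distortion via Lemma~\ref{l:l413} is correct; what is missing is precisely this intermediate layer $\{\mathcal D_i\}$ that converts the absolute niceness defect into a relative one.
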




Before proving Proposition \ref{p:l414}, we make some preliminary simplifications. First of all, it is enough to prove the statement for a single open set $\mathcal U$. The general case follows
setting $n(m,\mathfrak U):= \max_{\mathcal U \in \mathfrak U} n(n,U)$.

Assume that $\mathcal M (\mathcal U)=0$. In that case, it is enough to take
$n(m,\mathcal U)=m+1$ and $\mathcal Q_{m,n}= \emptyset$. Hence, we can assume that $\mathcal M (\mathcal U)>0$.

Fix integers $m_2 \gg m_1 \gg m$. By Proposition \ref{p:l411}, for $\mathcal M$-almost every $\gamma$, the tube $T_{M'}(\gamma, \eta)$ 
(and hence the ball $\mathcal B_{M'}(\gamma, \eta)$ of $\mathcal J$)
is $m_2$-nice if $\eta$ is sufficiently small (depending on $\gamma$).
It follows that, for $\mathcal U$ as above, we can find a finite union of $m_2$-nice balls 
$\mathcal B_i\subset \mathcal J$ with $\mathcal B_i\Subset U$,
satisfying \eqref{e:small_enough},
whose centers belong to $\Supp \M$,
and with the property that
$\mathcal M (\mathcal U \setminus \cup_i \mathcal B_i) < \mathcal M (\mathcal U)/m_2$.
Hence, it is enough to prove the statement for each of the balls  $\mathcal B_i$.
Namely,  the above gives that, in order to prove Proposition \ref{p:l414},
it is enough to 
 to prove the following statement.



\begin{proposition}\label{p:l414-reduced}
For every $m_1>0$ there exists $\bar m_2 (m_1)>m_1$ such that, for all  $m_2> \bar m_2$
%
and 
every
$m_2$-nice ball
$\mathcal T= \mathcal B(\gamma, \eta)$
with $\gamma \in \Supp \M$ and satisfying \eqref{e:small_enough} (with $\mathcal A = \mathcal T$ and $m = m_2$),
there exists $n(m_2)> m_2$ and, for all $n\geq n(m_2)$ a set $\mathcal Q_n\subset \mathcal T\cap \Supp \mathcal M$
of motions of repelling $n$-periodic points 
with the property that
\[
(1-1/m_1) \mathcal M (\mathcal T)
\leq
\sum_{\gamma \in \mathcal Q_n}
e^{\psi (\gamma) + \ldots +\psi (\F^{n-1} (\gamma))}
\leq 
(1+1/m_1) \mathcal M (\mathcal T).
\]
\end{proposition}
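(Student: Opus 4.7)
The plan is to combine the quantitative contraction provided by the $m_2$-niceness of $\mathcal{T}$ with Buff's fixed-point trick (as in \cite[Lemma 4.14]{BD22}), and then to compare the weights at the resulting periodic graphs with those at the inverse-branch graphs using the regularity estimate of Lemma \ref{l:l413}. The first step is to exploit simultaneously both pieces of information about $\mathcal{T}$. On one hand, since $\mathcal{T}$ is $m_2$-nice we have $\|\mathcal{M}^{(m_2)}_{T,n}\| \geq 1 - 1/m_2$ for every $n$ large enough. On the other hand, $\gamma \in \Supp\mathcal{M}$ together with condition \textbf{(M3)} gives $\mathcal{M}_{\gamma,n} \to \mathcal{M}$, and in particular (working with a ball of zero $\mathcal{M}$-boundary, obtained by adjusting $\eta$ if necessary) $\mathcal{M}_{\gamma,n}(\mathcal{T}) \to \mathcal{M}(\mathcal{T})$. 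Splitting the sum defining $\mathcal{M}_{\gamma,n}(\mathcal{T})$ according to whether the preimage comes from an $m_2$-good inverse branch yields
\[
\theta(\gamma)^{-1} \sum_{g \in \mathcal{G}} e^{\psi(\gamma_g) + \ldots + \psi(\mathcal{F}^{n-1}(\gamma_g))}\, \theta(\gamma_g) = \mathcal{M}(\mathcal{T}) + O(1/m_2),
\]
where $\mathcal{G}$ is the set of $m_2$-good inverse branches $g$ of $f^n$ on $T$ whose associated graph $\gamma_g$ lies in $\mathcal{T}$.

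Next, I would apply Buff's trick to each $g \in \mathcal{G}$. Being $m_2$-good, $g$ has image $g(T)$ with slice-diameter at most $e^{-m_2 - nA_0}$; for $n$ large enough (depending on $m_2$ and $\eta$) the inclusion $\gamma_g \in \mathcal{T}$ forces $g(T) \subset T$. Then $g$ induces a fiberwise contraction on the complete metric space $(\mathcal{T}, \dist_{M'})$, which admits a unique fixed graph $\gamma_g^* \in \mathcal{T}$ by the Banach fixed-point theorem. The graph $\gamma_g^*$ is a holomorphic motion of $n$-periodic points, and the contraction estimate of Proposition \ref{p:bbd-gen} together with the definition of $m_2$-good branch implies that at every $\lambda \in M'$ the derivative $Df^n_\lambda$ at $\gamma_g^*(\lambda)$ has norm $\gtrsim e^{nA_0}$, so $\gamma_g^*(\lambda)$ is repelling. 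Iterating $g$ starting from $\gamma \in \Supp\mathcal{M}$ gives a sequence $g^k(\gamma) \to \gamma_g^*$; each iterate lies in $\Supp\mathcal{M}$ by Lemma \ref{l:criterion-conditions}(6), so the closed set $\Supp\mathcal{M}$ contains $\gamma_g^*$.

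It then remains to pass from $\gamma_g$ to $\gamma_g^*$ in the weighted sum. Since both $\gamma_g$ and $\gamma_g^*$ lie in $g(T)$, their $\mathcal{F}^l$-iterates for $0 \leq l \leq n-1$ have graphs in $f^l(g(T))$, and Lemma \ref{l:l413} gives
\[
\Big| \sum_{l=0}^{n-1} \big( \psi(\mathcal{F}^l(\gamma_g^*)) - \psi(\mathcal{F}^l(\gamma_g)) \big) \Big| \leq C\, m_2^{-(q-1)}.
\]
Combining this with the near-constancy \eqref{e:small_enough} of $\theta$ on $\mathcal{T}$, which yields $\theta(\gamma_g)/\theta(\gamma) = 1 + O(1/m_2)$, the earlier display becomes
\[
\sum_{g \in \mathcal{G}} e^{\psi(\gamma_g^*) + \ldots + \psi(\mathcal{F}^{n-1}(\gamma_g^*))} = \big(1 + O(m_2^{-1}) + O(m_2^{-(q-1)})\big)\, \mathcal{M}(\mathcal{T}).
\]
Since $q > 2$, choosing $\bar m_2(m_1)$ large enough makes both error terms smaller than $1/(2m_1)$ for every $m_2 > \bar m_2$; the set $\mathcal{Q}_n := \{\gamma_g^* : g \in \mathcal{G}\}$ and any $n$ above a threshold depending on $m_2$ and the specific tube then satisfy the conclusion.

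The main obstacle is the first step: the $m_2$-niceness only bounds the total weighted mass of $m_2$-good branches, not how it is distributed between $\mathcal{T}$ and its complement. To isolate the branches landing inside $\mathcal{T}$ one genuinely has to pit the uniform lower bound $\|\mathcal{M}^{(m_2)}_{T,n}\| \geq 1 - 1/m_2$ against the convergence $\mathcal{M}_{\gamma,n}(\mathcal{T}) \to \mathcal{M}(\mathcal{T})$, which forces some care in choosing $\eta$ so that $\partial \mathcal{T}$ has zero $\mathcal{M}$-mass. Once this is in place, Buff's fixed-point trick and the H\"older-type bound of Lemma \ref{l:l413} produce the periodic graphs and control the weights in a rather routine way.
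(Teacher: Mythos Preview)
You have correctly identified the main obstacle, but your proposed resolution does not overcome it. Pitting the lower bound $\|\mathcal{M}^{(m_2)}_{T,n}\|\geq 1-1/m_2$ against $\mathcal{M}_{\gamma,n}(\mathcal{T})\to\mathcal{M}(\mathcal{T})$ only gives
\[
\mathcal{M}^{(m_2)}_{T,n}(\mathcal{T})
\;=\;\mathcal{M}_{\gamma,n}(\mathcal{T})-(\mathcal{M}_{\gamma,n}-\mathcal{M}^{(m_2)}_{T,n})(\mathcal{T})
\;\geq\;\mathcal{M}(\mathcal{T})-1/m_2-o(1),
\]
an \emph{additive} error of size $1/m_2$. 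The statement, however, demands a \emph{multiplicative} error: the sum must lie within $(1\pm 1/m_1)\mathcal{M}(\mathcal{T})$. Since $m_2$ is fixed (depending only on $m_1$) \emph{before} the ball $\mathcal{T}$ is chosen, and $\mathcal{M}(\mathcal{T})$ can be arbitrarily small, the bound $\mathcal{M}(\mathcal{T})-1/m_2$ is useless once $\mathcal{M}(\mathcal{T})<1/m_2$. Your first displayed identity is therefore correct as written (additive $O(1/m_2)$), but your final displayed identity, where the error has silently become multiplicative, does not follow from it.

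This is precisely why the paper does not iterate $m_2$-good branches of $T$ that land back in $\mathcal{T}$ in a single step. Instead it introduces an auxiliary finite cover by $m_3$-nice balls $\mathcal{D}_i$ with $m_3\geq m_2/\mathcal{M}(\mathcal{T})$, and factors each inverse branch as a composition $g_j=g_j^{(1)}\circ g_j^{(2)}$, where $g_j^{(2)}$ is an $m_2$-good branch of order $N_2$ on $T$ landing in some $\mathcal{D}_i^\star$, and $g_j^{(1)}$ is an $m_3$-good branch of order $N_1$ on $D_i$ landing in $\mathcal{T}^\star$. The point is that the additive loss $1/m_3$ incurred on the $\mathcal{D}_i$ side is, by the choice of $m_3$, at most $\mathcal{M}(\mathcal{T})/m_2$, which \emph{is} multiplicative in $\mathcal{M}(\mathcal{T})$; while on the $T$ side one is measuring $\mathcal{D}^\star$, which has nearly full mass, so the additive loss $1/m_2$ is already of the right relative size. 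The product formula $\tilde{\mathcal{M}}_n(\mathcal{T})=\sum_i \mathcal{M}^{(m_2)}_{T,N_2}(\mathcal{D}_i^\star)\cdot\mathcal{M}^{(m_3)}_{D_i,N_1}(\mathcal{T}^\star)$ then yields the required $(1\pm O(1/m_2))\mathcal{M}(\mathcal{T})$. Once this two-step scaffold is in place, the remainder of your argument (Buff's fixed-point trick, the contraction from Proposition~\ref{p:bbd-gen}, the appeal to Lemma~\ref{l:criterion-conditions}(6) for membership in $\Supp\mathcal{M}$, and the weight comparison via Lemma~\ref{l:l413} and \eqref{e:small_enough}) matches the paper's proof.

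A smaller issue: even when $\gamma_g\in\mathcal{T}=\mathcal{B}(\gamma,\eta)$, the inclusion $g(T)\subset T$ does not follow, since the slice of $g(T)$ around $\gamma_g(\lambda)$ may protrude past radius $\eta$. One needs $\gamma_g$ in a strictly smaller ball $\mathcal{T}^\star=\mathcal{B}(\gamma,\eta^\star)$ with $\eta^\star<\eta$; the paper handles this by working throughout with the pair $\mathcal{T}^\star\Subset\mathcal{T}$ (and similarly $\mathcal{D}_i^\star\Subset\mathcal{D}_i$).
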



Since $\M(\mathcal T) > 0$, we fix an integer $m_3 \ge m_2/ \mathcal M(\mathcal T)$
and a ball $\mathcal T^\star := \mathcal B_{M'}(\gamma, \eta^\star)$, where $\eta^\star < \eta$ is such that 
$\mathcal M( \mathcal T^\star)> (1-1/m_2) \mathcal M (\mathcal T)$.
We also denote by $T$ and $T^\star$
the tubes corresponding to $\mathcal T$ and $\mathcal T^\star$. 
Recalling that the support of $\mathcal M$ is compact,
we also 
choose a finite family of disjoint $m_3$-nice balls 
$\mathcal D_i := \mathcal B_{M'}(\gamma_i, \eta_i)$ such that
$\mathcal M (\cup_i \mathcal D_i) > 1-1/m_3$ and satisfying \eqref{e:small_enough} (with $\mathcal A = \mathcal D_i$ and $m=m_3$).
Set $\mathcal D:= \cup_i \mathcal D_i$.
We also fix balls 
$\mathcal D^\star_i := \mathcal B_{M'}(\gamma_i,\eta^\star_i) \subset \mathcal D_i$ with $\eta_i^* < \eta_i$, $\mathcal M (\cup_i \mathcal D^\star_i)> 1-1/m_3$
and set $\mathcal D^\star := \cup_i \mathcal D^\star_i$.
%
We will denote by $D_i$
the tube corresponding to $\mathcal D_i$ and set $D:=\cup D_i$.

\begin{lemma}\label{l:claim1}
There exists an integer $M_1 = M_1 (m_2, \mathcal T,\mathcal T^\star, \mathcal D_i)$
such that
\[
(1-4/m_2)
\mathcal M (\mathcal T)
\leq
\mathcal M^{(m_3)}_{D_i, N} (\mathcal T^\star)
\leq
(1+4/m_2)
\mathcal M(\mathcal T)
\mbox{ for all } i \mbox{ and } N\geq M_1.
\]
\end{lemma}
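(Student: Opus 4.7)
The plan is to show that $\M^{(m_3)}_{D_i,N}(\mathcal{T}^\star)$ and $\M_{\gamma_i,N}(\mathcal{T}^\star)$ differ by at most $1/m_3$, then leverage the weak convergence $\M_{\gamma_i,N} \to \M$ granted by condition {\bf (M3)(iv)} to relate both quantities to $\M(\mathcal{T})$ via $\M(\mathcal{T}^\star)$. Throughout, the relevant comparisons will be applied to the fixed open balls $\mathcal{T}$ and $\mathcal{T}^\star$, and uniformity in the finite index $i$ is eventually achieved by taking the maximum of the finitely many thresholds produced.

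First I would note that, since $\Lambda_\psi \theta = \theta$, the definition \eqref{e:web_definition} gives $\|\M_{\gamma_i,N}\| = \theta(\gamma_i)^{-1} \Lambda_\psi^N(\theta)(\gamma_i) = 1$ for every $N$; that is, $\M_{\gamma_i,N}$ is a probability measure. The measure $\M^{(m_3)}_{D_i,N}$ from \eqref{e:M-good-branches} is a positive sub-measure of $\M_{\gamma_i,N}$, being the restriction of its defining sum to preimages coming from $m_3$-good inverse branches on $D_i$. The $m_3$-niceness of $\mathcal{D}_i$ then gives $\|\M^{(m_3)}_{D_i,N}\| \geq 1 - 1/m_3$ for $N$ sufficiently large, hence
\[
\M_{\gamma_i,N}(\mathcal{T}^\star) - 1/m_3 \;\leq\; \M^{(m_3)}_{D_i,N}(\mathcal{T}^\star) \;\leq\; \M_{\gamma_i,N}(\mathcal{T}^\star) \;\leq\; \M_{\gamma_i,N}(\mathcal{T}).
\]

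Next I would invoke condition {\bf (M3)(iv)}, which yields the weak convergence $\M_{\gamma_i,N} \to \M$ for $\M$-almost every $\gamma_i$; the centers $\gamma_i$ of the $\mathcal{D}_i$ can be (and will be) selected in this full-measure set during the earlier construction. In order to apply Portmanteau's theorem in its strong form, I would also select the radii of $\mathcal{T}$ and $\mathcal{T}^\star$ outside the at most countable set of values for which the corresponding spheres carry positive $\M$-mass, so that $\M(\partial \mathcal{T}) = \M(\partial \mathcal{T}^\star) = 0$. This gives $\M_{\gamma_i,N}(\mathcal{T}^\star) \to \M(\mathcal{T}^\star)$ and $\M_{\gamma_i,N}(\mathcal{T}) \to \M(\mathcal{T})$ as $N\to\infty$.

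Combining the two steps, and using the hypothesis $\M(\mathcal{T}^\star) > (1 - 1/m_2)\M(\mathcal{T})$ together with $1/m_3 \leq \M(\mathcal{T})/m_2$ (guaranteed by $m_3 \geq m_2/\M(\mathcal{T})$), for $N$ large enough depending on $i$ the lower bound reads
\[
\M^{(m_3)}_{D_i,N}(\mathcal{T}^\star) \;\geq\; \M(\mathcal{T}^\star) - \M(\mathcal{T})/m_2 - 1/m_3 \;\geq\; (1 - 3/m_2)\M(\mathcal{T}),
\]
while the upper bound reduces to $\M^{(m_3)}_{D_i,N}(\mathcal{T}^\star) \leq \M_{\gamma_i,N}(\mathcal{T}) \leq (1 + 1/m_2)\M(\mathcal{T})$. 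The coefficient $4/m_2$ in the statement comfortably absorbs these small errors, and defining $M_1$ as the largest of the finitely many index-dependent thresholds delivers the claim uniformly in $i$. The one genuinely subtle point in this plan is the boundary-regularity requirement $\M(\partial \mathcal{T}) = \M(\partial \mathcal{T}^\star) = 0$ underlying the use of Portmanteau; this is however not a real obstacle, since the preceding construction in the proof of Proposition \ref{p:l414-reduced} leaves the radii free to vary and only countably many values need to be avoided, and alternatively one could work with slightly smaller/larger concentric open/closed balls to obtain the same inequalities via $\liminf$/$\limsup$.
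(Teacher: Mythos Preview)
Your proposal is correct and follows essentially the same route as the paper: bound the defect $\M_{\gamma_i,N}-\M^{(m_3)}_{D_i,N}$ in total mass using $m_3$-niceness of $\mathcal D_i$ together with $m_3\geq m_2/\M(\mathcal T)$, then use the convergence $\M_{\gamma_i,N}\to\M$ and the inequality $\M(\mathcal T^\star)\geq(1-1/m_2)\M(\mathcal T)$ to finish. Two minor remarks: in the specific setting of Section~\ref{ss:equilibrium} the paper invokes \eqref{e:conv-42-web}, which gives $\M_{\gamma,N}\to\M$ for \emph{every} $\gamma\in\mathcal X$ (not just $\M$-a.e.), so there is no need to select the centres $\gamma_i$ in a full-measure set; and your explicit discussion of the Portmanteau boundary issue is in fact more careful than the paper, which simply cites \eqref{e:conv-42-web} at that step.
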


Recall that $\mathcal M^{(m_3)}_{D_i,N}$ 
is defined in \eqref{e:M-good-branches}.

\begin{proof}
Since the balls $\mathcal D_i$ are $m_3$-nice
and $m_3 \ge m_2 / \mathcal M (\mathcal T)$,
we have
\[
\mathcal M^{(m_3)}_{D_i,N}
\geq 
(1-\mathcal M(\mathcal T)/m_2)
\quad
\mbox{ for all } i \mbox{ and all } N
\mbox{ large enough}.
\]
As $\mathcal M^{(m_3)}_{D_i,N}\leq \mathcal M_{\gamma_i, N}$ and $\|\mathcal M_{\gamma_i, N}\|= 1 + o(1)$, it follows that
\[
\|\mathcal M_{\gamma_i, N} 
-\mathcal M^{(m_3)}_{D_i, N}\|\leq 
\mathcal M (\mathcal T) / m_2 + o(1).
\]
Hence, 
it is enough to prove that
\[
(1-2/m_2)\mathcal M (\mathcal T)
\leq 
\mathcal M_{\gamma_i, N} (\mathcal T^\star)
\leq 
(1+2/m_2)\mathcal M(\mathcal T)
\mbox{ for all } i \mbox{ and all } N
\mbox{ large enough}.
\]
As $\mathcal M(\mathcal T^\star)\geq (1-1/m_2) \mathcal M(\mathcal T)$, this is a consequence of \eqref{e:conv-42-web}.
\end{proof}

\begin{lemma}\label{l:claim2}
There exists an integer $M_2 = M_2 (m_2, \mathcal T, \mathcal D^\star)$
such that
\[
(1-4/m_2)
\leq
\mathcal M^{(m_2)}_{T, N} (\mathcal D^\star)
\leq
(1+4/m_2)
\mbox{ for all } i \mbox{ and } N\geq M_2.
\]
\end{lemma}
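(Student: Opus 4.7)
The plan is to sandwich $\mathcal M^{(m_2)}_{T,N}(\mathcal D^\star)$ between $\|\mathcal M^{(m_2)}_{T,N}\|$ and $\|\mathcal M^{(m_2)}_{T,N}\|-\mathcal M^{(m_2)}_{T,N}(\mathcal J\setminus \mathcal D^\star)$, and to control both total masses by comparing $\mathcal M^{(m_2)}_{T,N}$ with $\mathcal M_{\gamma,N}$, using the $m_2$-niceness of $T$ on the one hand and the weak convergence $\mathcal M_{\gamma,N}\to \mathcal M$ provided by \textbf{(M3)} on the other. As in the proof of Lemma \ref{l:claim1}, I would assume (without loss of generality, thanks to the construction leading to Proposition \ref{p:l414-reduced}) that the center $\gamma$ of the ball $\mathcal T$ lies in the full $\mathcal M$-measure set where this convergence holds.

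For the upper bound, I would write
\[
\mathcal M^{(m_2)}_{T,N}(\mathcal D^\star)\;\leq\;\|\mathcal M^{(m_2)}_{T,N}\|\;\leq\;\|\mathcal M_{\gamma,N}\|,
\]
using that $\mathcal M^{(m_2)}_{T,N}\leq \mathcal M_{\gamma,N}$ as positive measures (since the former only counts contributions of $m_2$-good inverse branches among all preimages). Since $\mathcal M_{\gamma,N}$ converges to the probability measure $\mathcal M$, its total mass tends to $1$, and the right-hand side is bounded by $1+4/m_2$ for every $N$ large enough.

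For the lower bound, I would decompose
\[
\mathcal M^{(m_2)}_{T,N}(\mathcal D^\star)=\|\mathcal M^{(m_2)}_{T,N}\|-\mathcal M^{(m_2)}_{T,N}(\mathcal J\setminus \mathcal D^\star).
\]
The $m_2$-niceness of $T$ gives $\|\mathcal M^{(m_2)}_{T,N}\|\geq 1-1/m_2$ for all $N$ large enough. The set $\mathcal D^\star=\cup_i \mathcal D^\star_i$ is open in $\mathcal J$ as a finite union of balls, so $\mathcal J\setminus \mathcal D^\star$ is closed; by the Portmanteau theorem applied to $\mathcal M_{\gamma,N}\to \mathcal M$,
\[
\limsup_{N\to\infty}\mathcal M^{(m_2)}_{T,N}(\mathcal J\setminus \mathcal D^\star)\;\leq\;\limsup_{N\to\infty}\mathcal M_{\gamma,N}(\mathcal J\setminus \mathcal D^\star)\;\leq\;\mathcal M(\mathcal J\setminus \mathcal D^\star)\;<\;\tfrac{1}{m_3}.
\]
Since $m_3\geq m_2/\mathcal M(\mathcal T)\geq m_2$ by construction, combining these two estimates yields $\mathcal M^{(m_2)}_{T,N}(\mathcal D^\star)\geq 1-1/m_2-1/m_3+o(1)\geq 1-4/m_2$ for all $N$ large enough, as desired.

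There is no serious obstacle here: the argument is essentially a routine passage to the limit, parallel to Lemma \ref{l:claim1}, and the only delicate point is checking that the convergence $\mathcal M_{\gamma,N}\to \mathcal M$ is available at the specific center $\gamma$, which is guaranteed by the $\mathcal M$-generic choice of $m_2$-nice centers in the setup of Proposition \ref{p:l414-reduced}. The integer $M_2$ is then taken as the maximum of the thresholds needed for the two asymptotic estimates above.
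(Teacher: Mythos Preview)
Your proof is correct and follows essentially the same approach as the paper: both compare $\mathcal M^{(m_2)}_{T,N}$ with $\mathcal M_{\gamma,N}$ via $m_2$-niceness and the inequality $\mathcal M^{(m_2)}_{T,N}\leq \mathcal M_{\gamma,N}$, and then use the convergence $\mathcal M_{\gamma,N}\to \mathcal M$ together with $\mathcal M(\mathcal D^\star)\geq 1-1/m_3$. The only cosmetic difference is that the paper reduces directly to bounding $\mathcal M_{\gamma,N}(\mathcal D^\star)$, whereas you pass through the complement $\mathcal J\setminus \mathcal D^\star$ and invoke Portmanteau explicitly; also, your caveat about the $\mathcal M$-generic choice of $\gamma$ is harmless but in fact unnecessary here, since in this section \eqref{e:conv-42-web} gives the convergence for every $\gamma\in\mathcal X$.
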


\begin{proof}
Since $\mathcal T$ is $m_2$-nice,
we have
\[
\|\mathcal M^{(m_2)}_{T,N}
\|
\geq (1-1/m_2)
\mbox{ for all } N \mbox{ large enough}.
\]
Recall also that $\mathcal M^{(m_2)}_{T,N} \leq \mathcal M_{\gamma, N}$
and that 
$\|\mathcal M_{\gamma,N}\|\leq 1+o(1)$. So, in order to prove the statement, it is enough to prove that
\[
1-2/m_2 \leq
\mathcal M_{\gamma,N} (\mathcal D^\star)
\leq 1+1/m_2.
\]
Since $\mathcal M (\mathcal D^\star)\geq 1-1/m_3$ and $m_3 \ge m_2$, this follows from \eqref{e:conv-42-web}.
\end{proof}



We now construct the set $\mathcal Q$ of motions of repelling periodic points as in Proposition \ref{p:l414-reduced}.

\medskip


For every $N_1$ sufficiently large,
every element
in the support
of $\mathbb 1_{\mathcal T^\star} \mathcal M^{(m_3)}_{D_i, N_1}$
corresponds to an $m_3$-good inverse branch of $f:M\times \P^k \to M\times \P^k$ 
of order $N_1$ on the tube $D_i$, which maps $D_i$
to
a 
subset of the tube $T$
whose slice at any $\lambda$ is relatively compact in $T_{|\lambda}$.
In the same way, for every $N_2$ sufficiently large, every element in the support of 
$\mathbb 1_{\mathcal D^\star}\mathcal M^{(m_2)}_{T, N_2}$
corresponds to an $m_2$-good inverse branch of order $N_2$
sending the tube $T$ to a 
subset of $D= \cup_i D_i$.

\medskip

We define the collection $\{g_j\}$ to be the compositions
of such inverse branches, giving inverse branches for $f^{N_1+N_2}$ sending the tube 
$T$ to
a
subset of $T$ whose slice at any $\lambda$ is relatively compact in $T_{|\lambda}$.
Given any such $g_j$, we write $g_j = g_j^{(1)} \circ g_j^{(2)}$, where
$g_j^{(2)}$ is an
inverse branch of $f^{N_2}$ on $T$ with image in $D$ and $g_j^{(1)}$ is an inverse branch of $f^{N_1}$ on some $D_i$ with image in $T$. For every 
$j$, we also set $i=i(j)$  where $i$ is defined
by $g^{(2)}_j (T) \subset D_i$.

By definition, the inverse branch $g_j$ is of the form $g_j(\lam, z) = (\lam, g_j^\lam(z))$, where $\lam \mapsto g_j^\lam(z)$ is holomorphic.
For $\gamma' \in \mathcal T$, set $\mathcal G_j(\gamma')(\lam) := g_j^\lam(\gamma'(\lam))$, so that $\mathcal G_j (\mathcal T) \subseteq \mathcal T^*$.
One can define in a similar way $\mathcal G_j^{(1)} : \mathcal D_i \to \mathcal T^*$ and $\mathcal G_j^{(2)} : \mathcal T \to \mathcal D_i$ and remark that $\G_j = \G_j^{(1)} \circ \G_j^{(2)}$.

\begin{lemma}\label{l:completeness}
The space $(\Supp \M, \dist_{M'})$ is complete.
\end{lemma}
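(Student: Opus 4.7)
The plan is to leverage the compactness of $\Supp \M$ in the ambient metric $\dist_\J$, together with the simple metric-space fact that a Cauchy sequence admitting a convergent subsequence is itself convergent (with the same limit). Since $\mathcal M$ is compactly supported by definition, $\Supp \M$ is a compact subset of the Polish space $(\J, \dist_\J)$.

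First I would take an arbitrary Cauchy sequence $(\gamma_n)_n \subset \Supp \M$ for the metric $\dist_{M'}$. By compactness of $\Supp \M$ in $(\J, \dist_\J)$, I extract a subsequence $(\gamma_{n_k})_k$ which converges in $\dist_\J$ to some $\tilde \gamma$. Because $\Supp \M$ is closed in $\J$, the limit $\tilde \gamma$ still lies in $\Supp \M$.

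Next I need to upgrade this subsequential convergence in $\dist_\J$ to convergence in $\dist_{M'}$. Recall that $\dist_\J$ is (equivalent to) the metric of local uniform convergence on $M$, built from the exhaustion $(K_n)_n$. Since $M' \Subset M$, there exists $n_0$ with $\overline{M'} \subset K_{n_0}$, so convergence with respect to $\dist_\J$ forces uniform convergence on $\overline{M'}$, hence on $M'$. Therefore $\dist_{M'}(\gamma_{n_k}, \tilde \gamma) \to 0$.

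Finally, I invoke the general metric-space fact: if a Cauchy sequence has a subsequence converging to some point, the whole sequence converges to that same point. Applied to $(\gamma_n)_n$ in $(\Supp \M, \dist_{M'})$, this gives $\dist_{M'}(\gamma_n, \tilde \gamma) \to 0$ with $\tilde \gamma \in \Supp \M$, proving completeness. There is no genuinely hard step here; the only point requiring attention is confirming that $\dist_\J$-convergence implies $\dist_{M'}$-convergence, which follows immediately from $M' \Subset M$ and the definition of $\dist_\J$ via an exhaustion of $M$.
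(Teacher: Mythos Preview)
Your proof is correct and follows essentially the same approach as the paper: extract a subsequence using compactness of $\Supp \M$ for local uniform convergence, observe that local uniform convergence implies convergence on the relatively compact $M'$, and conclude via the standard fact that a Cauchy sequence with a convergent subsequence converges. The paper's version is terser but structurally identical.
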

\begin{proof}
Let $(\gamma_n)_{n\in \N}$ be a Cauchy sequence.
As $\Supp \M$ is compact with respect to the local uniform convergence, there exists a subsequence $\gamma_{n_j}$ such that
$\gamma_{n_j}$ converges locally uniformly to a graph $\gamma \in \Supp \M$.
In particular, $\gamma_{n_j}$ converges to $\gamma$ on $M'$ and the Cauchy sequence $(\gamma_n)_{n\in \N}$ admits a cluster value, hence converges.
\end{proof}

\begin{lemma}\label{l:motions}
For $N=N_1+N_2$ large enough, the operator $\G_j$ admits a fixed point $\gamma_j^{\infty} \in \mathcal T \cap \Supp \M$.
Furthermore, $\gamma_j^\infty(\lam)$ is a periodic point of period $N$ of $f_\lam$, for any parameter $\lam \in M$,
and it is repelling for any $\lam \in M'$.
\end{lemma}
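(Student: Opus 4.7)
My plan is to obtain $\gamma_j^\infty$ via a Banach contraction argument, applied to the map $\mathcal{G}_j$ on an appropriate closed subset of the complete metric space $(\Supp\mathcal{M},\dist_{M'})$ provided by Lemma \ref{l:completeness}. The first step is to establish a strict contraction estimate. Since $g_j=g_j^{(1)}\circ g_j^{(2)}$ is built out of $m_2$-good and $m_3$-good inverse branches of $f$, the diameter bound $\diam g_j(T)_{|\lambda}\leq e^{-m_2-NA_0}$ (the $l=0$ case in the definition of an $m$-good branch) holds uniformly in $\lambda\in M'$. Turning this metric decay into a derivative bound via a standard Cauchy estimate in the $\P^k$-direction, using as buffer the positive gap $\eta-\eta^\star$ between $T^\star$ and $\partial T$, or alternatively invoking Proposition \ref{p:bbd-gen} applied to a typical trajectory $\hat\gamma$ realising this inverse branch, produces a uniform bound $\widetilde\Lip(g_j^\lambda)\leq C e^{-NA_0}$. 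Consequently $\Lip(\mathcal{G}_j)<1/2$ with respect to $\dist_{M'}$ as soon as $N=N_1+N_2$ is sufficiently large.

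Next, I would verify that $\mathcal{G}_j$ maps the closed set $\overline{\mathcal{T}^\star}\cap\Supp\mathcal{M}$ into itself. By construction $\mathcal{G}_j(\mathcal{T})\subseteq\mathcal{T}^\star$; and since $\mathcal{G}_j$ is an inverse branch of $\mathcal{F}^N$, Lemma \ref{l:criterion-conditions}(6) applied $N$ times yields $\mathcal{G}_j(\Supp\mathcal{M})\subseteq\Supp\mathcal{M}$. The set is non-empty because the center $\gamma$ lies in $\Supp\mathcal{M}$ by our choice at the beginning of Section \ref{ss:construction}. Banach's theorem then produces a unique fixed point $\gamma_j^\infty\in\mathcal{T}\cap\Supp\mathcal{M}$.

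Periodicity and repellingness follow from standard identifications. The relation $\mathcal{F}^N\circ\mathcal{G}_j=\text{id}$ applied at $\gamma_j^\infty$ gives $f_\lambda^N(\gamma_j^\infty(\lambda))=\gamma_j^\infty(\lambda)$ for every $\lambda\in M'$; as both sides are holomorphic on the connected manifold $M$, this identity extends by analytic continuation to all $\lambda\in M$, so $\gamma_j^\infty(\lambda)$ is an $N$-periodic point of $f_\lambda$ everywhere. Differentiating $f_\lambda^N\circ g_j^\lambda=\text{id}$ at the fixed point yields $Df_\lambda^N(\gamma_j^\infty(\lambda))=(Dg_j^\lambda(\gamma_j^\infty(\lambda)))^{-1}$, and the operator-norm bound on $Dg_j^\lambda$ coming from the Lipschitz estimate of the first step forces every eigenvalue of $Df_\lambda^N$ at $\gamma_j^\infty(\lambda)$ to have modulus strictly greater than $1$, uniformly in $\lambda\in M'$.

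The main obstacle, and the step that requires the most care, is the first one: upgrading the ``diameter-on-slices'' information encoded in the definition of an $m$-good inverse branch to a genuine uniform-in-$\lambda$ Lipschitz bound for $g_j^\lambda$. The buffers $\eta^\star<\eta$ and $\eta_i^\star<\eta_i$ built into the construction of Section \ref{ss:construction} are precisely there to provide the Cauchy-estimate room needed for this upgrade, and it is also for this reason that one should work with the restriction of $\mathcal{G}_j$ to $\overline{\mathcal{T}^\star}$ rather than to $\overline{\mathcal{T}}$ itself.
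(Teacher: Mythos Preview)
Your proposal is correct and follows essentially the same scheme as the paper: both apply the Banach fixed point theorem to $\mathcal G_j$ on $\overline{\mathcal T^\star}\cap\Supp\mathcal M$, using Lemma~\ref{l:completeness} for completeness and Lemma~\ref{l:criterion-conditions}(6) for invariance. The paper obtains the contraction estimate by citing Proposition~\ref{p:bbd-gen} directly (relying implicitly on the fact that the $m$-good branches used in the construction of the measures $\mathcal M^{(m)}_{T,n}$ arise, via the proof of Proposition~\ref{p:l411}, from trajectories in $\hat{\mathcal Y}_N$); you are right to flag that this step needs justification, and both routes you propose are valid.

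The only noteworthy divergence is in extending periodicity from $M'$ to $M$: you argue by analytic continuation (since $\gamma_j^\infty\in\Supp\mathcal M\subset\mathcal J$ is already holomorphic on all of $M$ and both sides of $f_\lambda^N(\gamma_j^\infty(\lambda))=\gamma_j^\infty(\lambda)$ agree on the open set $M'$), whereas the paper first establishes repellingness on $M'$ and then invokes \cite[Lemma~2.5]{BBD18}. Your route is more elementary and avoids the external reference. For repellingness itself, the paper uses the metric expansion inequality $\dist_{M'}(\mathcal F^{qN}(\gamma_1),\gamma_j^\infty)\gtrsim e^{qNA}\dist_{M'}(\gamma_1,\gamma_j^\infty)$, while your eigenvalue argument via $Df_\lambda^N=(Dg_j^\lambda)^{-1}$ is an equivalent linearised version; both are fine.
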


\begin{proof}
Since $\gamma \in \Supp \M$, by Lemma \ref{l:criterion-conditions} (6) and Lemma \ref{l:completeness},
it is sufficient to prove that $\G_j$ is a contraction on a closed subset of $\mathcal T$ containing $\gamma$.
Take $\gamma_1, \gamma_2 \in \mathcal T$. 
By Proposition \ref{p:bbd-gen}, we have
\begin{equation}\label{e:contraction}
\dist_{M'}(\G_j(\gamma_1), \G_j(\gamma_2)) \lesssim 
e^{-NA} \dist_{M'}(\gamma_1,\gamma_2),
\end{equation}
where the implicit constant is independent of $N$.
Hence, for all $N$ sufficiently large,
$\G_j$ is 
a contraction on the closed set
$\overline{\mathcal T^*} = \{\gamma' \in \J \colon \dist_\J(\gamma,\gamma') \le \eta^*\}$.
By the Banach fixed point theorem, applied to the complete metric space $\overline{\mathcal T^*}\cap \Supp \M$, the map $\G_j$ admits a  fixed point
$\gamma_j^\infty \in \mathcal T \cap \Supp \M$.
In particular, observe that $\gamma_j^\infty(\lam)$ lies in $J_\lam$ for any $\lam \in M$.
To conclude, it remains to prove that $\gamma_j^\infty(\lam)$ is an $N$-periodic point for every $\lam \in M$ and 
is repelling for every $\lam \in M'$.

Recall that $T=T(\gamma, \eta)$ is the tube associated to the ball $\mathcal T$.
Remark that $f^N \colon g_j(T) \to T$ is onto,
hence $f^N\circ g_j = id_T$.
For all $\lambda \in M'$,
the equality $\gamma_j^\infty(\lam) = \G_j(\gamma_j^\infty)(\lam) = g_j(\gamma_j^\infty(\lam))$
then leads to $f_\lam^N(\gamma_j^\infty(\lam)) = \gamma_j^\infty(\lam)$. This shows that $\gamma_j^\infty$ is indeed the motion of an $N$-periodic point over $M'$.
In order to extend it to a motion over all $M$, it is sufficient to show that $\gamma_j^\infty(\lam)$ is repelling for any $\lam \in M'$.
Indeed, since $\gamma_j^\infty \in \Supp \M$, \cite[Lemma 2.5]{BBD18} then allows us to conclude.

Observe that
\eqref{e:contraction} also 
gives
a positive constant $c$ such that
\[
\dist_{M'}(\F^{qN}(\gamma_1),\gamma_j^\infty) 
\gtrsim 
e^{qNA}
\dist_{M'}(\gamma_1, \gamma_j^\infty),
\]
for every positive integer $q \in \N^*$ and any graph $\gamma_1 \in \mathcal T \left( \G_j^q(\gamma_1), 
c\cdot e^{-qNA}
\right)
$, where the implicit constant is independent of $q$.
This proves that $\gamma_j^\infty(\lam)$ is repelling for any $\lam \in M'$, and concludes the proof.
\end{proof}



We can now conclude the proof of Proposition \ref{p:l414-reduced}. As explained above, this also completes the proof of Proposition \ref{p:l414}.

\begin{proof}[End of the proof of Proposition \ref{p:l414-reduced}]
%
%
%
%
%
We continue to use the notations introduced above. 
Up to possibly increasing $M_1$ and $M_2$ in Lemmas \ref{l:claim1} and \ref{l:claim2}, we can
assume that we can take $N_1=M_1$ and $N_2=M_2$ in the construction before Lemma \ref{l:motions}.
We 
set $n(m_2):= M_1 (m_2)+M_2 (m_2)$, for a fixed choice of sufficiently large $m_2$ and $m_3$ as above. For every $n> n(n_2)$, we
denote by $\mathcal Q_n$ the set of the motions of $n$-repelling point $\sigma_j$ given by Lemma \ref{l:motions}
applied with $N_1 = M_1(m_2)$ and $N_2 = n-M_2 (m_2)$.
Every element $\sigma\in \mathcal Q_n$ is then the motion of
a repelling $n$-periodic point.






Set
\[
\mathcal M_n :=
\sum_{\sigma \in \mathcal Q_n}
e^{\psi (\gamma) + \ldots + \psi (\mathcal F^{n-1} (\gamma))}
\delta_\gamma
=
\sum_j e^{\psi (\sigma_j) + \ldots + \psi (\mathcal F^{n-1} (\sigma_j))}
\delta_{\sigma_j},
\]
where $\mathcal Q_n = \{\sigma_j\}$, and
\[\begin{aligned}
\tilde{\mathcal M}_n :=
\sum_j
\Big( e^{\psi (\mathcal G_j^{(1)} (\gamma_i(j))) 
+ \ldots +
\psi (\mathcal F^{N_1-1} \circ \mathcal G_j^{(1)} (\gamma_{i(j)}))}
& \cdot \frac{\theta (\mathcal G_j^{(1)} (\gamma_{i(j)})) }{\theta (\gamma_{i(j)})}\\
& \cdot e^{\psi (\mathcal G_j^{2)} (\gamma) 
+ \ldots +
\psi (\mathcal F^{N_2-1} \circ \mathcal G_j^{(2)} (\gamma))}
\frac{\theta (\mathcal G_j^{(2)} (\gamma))}{\theta (\gamma)}
\Big)
\delta_{\mathcal G_j (\gamma)},
\end{aligned}\]
where we recall that $\mathcal T= B(\gamma, \eta)$ 
and $\mathcal D_i =B(\gamma_i, \eta_i)$. Observe in particular that
\begin{equation}\label{e:MntildeT}
\tilde {\mathcal M}_n (\mathcal T)
=
\sum_i \mathcal M_{T,N_2}^{(m_2)} (\mathcal D^\star_i)
\cdot 
\mathcal M_{D_i, N_1}^{(m_3)} (\mathcal T^\star).
\end{equation}
By Lemma \ref{l:claim1} and the fact that 
$\sum_i \mathcal M^{(m_2)}_{T, N_2} (\mathcal D_i^\star)=
\mathcal M^{(m_2)}_{T,N_2} (\mathcal D^\star)$, 
we have
\[
(1-4/m_2)
\mathcal M(\mathcal T)
\mathcal M^{(m_2)}_{T,N_2} (\mathcal D^\star)
\leq 
\tilde{\mathcal M}_n (\mathcal T)
\leq
(1+4/m_2)
\mathcal M (\mathcal T)
\mathcal M^{(m_2)}_{T,N_2} (\mathcal D^\star).
\]
By Lemma \ref{l:claim2}, this gives
\begin{equation}\label{e:estimate-tilde}
(1-10/m_2)
\mathcal M(\mathcal T)
\leq 
\tilde{\mathcal M}_n (\mathcal T)
\leq
(1+10/m_2)
\mathcal M (\mathcal T).
\end{equation}
Hence, 
up to taking $m_2$ sufficiently large,
the desired estimate holds if we replace $\mathcal M_n (\mathcal T)$ with
$\tilde {\mathcal M}_n (\mathcal T)$.
It is then enough to compare
$\mathcal M_n (\mathcal T)$ with
$\tilde {\mathcal M}_n (\mathcal T)$.


\medskip

Observe that, for all $i$ and $j$,
the graphs of $\gamma$ and $\mathcal G_j^{(1)}(\gamma_{i(j)})$
belong to $T$ and 
those of $\gamma_{i(j)}$
and $\mathcal G_j^{(2)} (\gamma)$ belong to $D_i$. Hence, 
since the tubes $T$ and $D_i$ are $m_2$-nice
and satisfy \eqref{e:small_enough}
(with $\mathcal A$ replaced by $\mathcal T$ and $\mathcal D_i$, respectively), we have
\[
\Big|
\frac{\theta ( \mathcal G_j^{(1)} (\gamma_{i(j)}) )}{\theta (\gamma)}
-1\Big|\le \frac{2}{m_2}
\mbox{ and }
\Big|
\frac{\theta (\mathcal G_j^{(2)} (\gamma))}{\theta(\gamma_{i(j)})}
-1\Big|\le \frac{2}{m_2}
\mbox{ for all }
j.\]
It follows from these inequalities and Lemma \ref{l:l413}
that 
$\lvert\mathcal M_n (\mathcal T)- \tilde{\mathcal M}_n (\mathcal T)\rvert  \lesssim
\tilde{\mathcal M}_n (\mathcal T)/m_2$. 
This, together with \eqref{e:estimate-tilde}, concludes the proof.
%
%
%
%
%
\end{proof}

\subsection{Proof of Theorem \ref{t:main}}\label{ss:proof-main}




We can now conclude the proof of Theorem \ref{t:main}. Recall that we are assuming that $P(\phi)=0$ and that 
we denote by $\mathcal M$ the web $\mathcal M_{\lam_0, \mu_\phi}$, whose construction is detailed in Section \ref{ss:equilibrium}.



\medskip

We first build a  family
of measurable partitions of $(\J, \M)$ whose diameter converges to $0$.

\begin{lemma}\label{l:partition}
There exists a sequence $(\mathfrak U_i)_{i\in \N}$ of finite families of disjoint open sets $\mathfrak U_i = \{\mathcal U_{i,j}\}_{1\leq j \leq J_i}$ satisfying the following properties:
\begin{enumerate}
\item[{\bf (U1)}] for all $i \in \N$, we have $\mathcal M (\cup_{1\leq j \leq J_i} \mathcal U_{i,j})=1$;
\item[{\bf (U2)}] $\sup_{i\in\N} (i \cdot \max_{1\leq j \leq J_i}\diam_{\J} \mathcal U_{i,j})\leq 1$;
\item[{\bf (U3)}] for all $i\geq 2$ and $1\leq j \leq J_i$
there exists $1\leq j'\leq J_{i-1}$ such that
$\mathcal U_{i,j} \subset \mathcal U_{i-1,j'}$.
\end{enumerate}
\end{lemma}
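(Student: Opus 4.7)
The plan is to construct $\mathfrak U_i$ by induction on $i$, producing at each step a finite pairwise disjoint family of small open sets refining the previous one. Two ingredients of our Polish, compactly-supported setting make this possible. Since $\M$ is compactly supported, $\Supp \M$ admits, for every $\epsilon>0$, a finite cover by open $\dist_\J$-balls of radius $<\epsilon$. Moreover, for any fixed center in $\J$ the topological spheres of different radii are pairwise disjoint, so at most countably many radii $r>0$ can satisfy $\M(\partial B(\gamma,r))>0$: one can therefore always choose radii with $\M$-null spherical boundaries. I will also carry through the induction the auxiliary property that $\M(\partial \mathcal U_{i,j})=0$ for every $i$ and $j$, which is essential for the induction to close.

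For the base case $i=1$, I would cover $\Supp \M$ by finitely many open balls $B_k = B(\gamma_k, r_k)$ with $2 r_k < 1$ and $\M(\partial B_k)=0$, and disjointify them by setting
\[
V_k := B_k \setminus \overline{V_1 \cup \cdots \cup V_{k-1}}.
\]
Using the standard inclusions $\partial(A\cap B) \subseteq \partial A \cup \partial B$, $\partial(A\cup B)\subseteq \partial A \cup \partial B$, and $\partial \overline{A} \subseteq \partial A$, one verifies inductively that $\partial V_k \subseteq \bigcup_{\ell \le k} \partial B_\ell$, hence $\M(\partial V_k)=0$. The $V_k$ are pairwise disjoint open sets of diameter $<1$, and their union covers $\Supp \M$ up to the $\M$-null set $\bigcup_k \partial B_k$. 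I set $\mathfrak U_1 := \{V_k\}$. For the inductive step, for each $\mathcal U_{i-1,\ell}$ I cover the compact set $\Supp \M \cap \overline{\mathcal U_{i-1,\ell}}$ by finitely many open balls $B_{\ell,m}$ of diameter $<1/i$ with $\M(\partial B_{\ell,m})=0$, and set
\[
W_{\ell,m} := (B_{\ell,m} \cap \mathcal U_{i-1,\ell}) \setminus \overline{\bigcup_{m' < m} (B_{\ell, m'} \cap \mathcal U_{i-1,\ell})}.
\]
These sets are open, pairwise disjoint, contained in $\mathcal U_{i-1,\ell}$ (giving (U3)), of diameter $<1/i$ (giving (U2)), and their boundaries lie in $\bigcup_{m' \le m} \partial B_{\ell, m'} \cup \partial \mathcal U_{i-1,\ell}$, hence are $\M$-null by the inductive hypothesis, so the auxiliary property propagates. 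Taking $\mathfrak U_i := \{W_{\ell,m}\}_{\ell,m}$, any point of $\Supp \M$ missed by $\bigcup_{\ell,m} W_{\ell,m}$ lies in $\bigcup_\ell \partial \mathcal U_{i-1,\ell} \cup \bigcup_{\ell,m} \partial B_{\ell,m}$, an $\M$-null set, yielding (U1).

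The only delicate bookkeeping, and what I would expect to be the main obstacle, is preserving the auxiliary $\M$-null-boundary property across the induction: if at some step a $\mathcal U_{i,j}$ had positive-measure boundary, the next disjointification would leak positive $\M$-mass into the closures and (U1) would break. This is handled, as above, by systematically choosing every radius in the co-countable set of values avoiding positive-$\M$-measure spheres; once this input is secured, everything else is elementary metric-measure theory on the Polish space $(\J,\dist_\J)$.
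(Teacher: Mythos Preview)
Your argument is correct and follows essentially the same route as the paper: both proofs exploit compactness of $\Supp\M$ to get finite covers by small balls, use the countability of radii with positive-$\M$-measure spheres to secure $\M$-null boundaries, and disjointify by successively subtracting closures. Your version is in fact slightly cleaner, since you disjointify directly inside each $\mathcal U_{i-1,\ell}$ and carry the null-boundary property explicitly as an inductive hypothesis, whereas the paper covers $\Supp\M$ globally and passes to connected components at each step.
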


\begin{proof}
We prove the lemma by induction on $i\in \mathbb N$.
We set $J_0 = 1$ and $\mathcal U_{0,1} = \J$.
The family $\mathfrak U_0 :=\{U_{0,1}\}$
satisfies conditions {\bf (U1)} and {\bf (U2)}, and condition 
{\bf (U3)} is empty in this case.

\medskip

We now do the induction step. Assume that we have built, for some $i \in \N$, a finite family $\mathfrak U_i = \{\mathcal U_{i,j}\}_{1\le j \le J_i}$ satisfying \textbf{(U1)}, \textbf{(U2)}, and \textbf{(U3)}.
Consider the interval $I:= (\frac{1}{2(i+2)}, \frac{1}{2(i+1)})$.
As $\Supp \M \subseteq  \cup_{\gamma \in \Supp \mathcal M} \mathcal B_{\J}(\gamma, \frac{1}{2(i+2)})$ 
and $\Supp \mathcal M$ is compact, there exists $m_i \in \N$ and a collection $\{\gamma_{i,m}\}_{1\leq m\leq m_i} \subset \Supp \M$
such that, for any $\epsilon \in I$, $\Supp \M \subseteq \cup_{m=1}^{m_i} \mathcal B_{\J}(\gamma_{i,m}, \epsilon)$.
Set
$B_{i,m}^\epsilon := \mathcal B_{\J}(\gamma_{i,m},\epsilon)$ and $D_{i,m}^\epsilon := \partial \mathcal B_{i,m}^\epsilon$.

\medskip

\noindent \textbf{Claim.} The set $A_i:=\{\epsilon \in I \::\:\M(\cup_{1\le m\le m_i} D_{i,m}^\epsilon) > 0\}$ is countable.

\begin{proof}
As the family $\{D_{i,m}^\epsilon\}_{1\leq m \leq m_i}$ is finite, 
it suffices to prove that 
$A_{i,m} := \{\epsilon \in I \::\: \M(D_{i,m}^\epsilon) > 0\}$
is countable
for every 
$1\le m \le m_i$.
Assume this is not true for some $A_{i,\bar m}$.

For $\alpha \in \mathbb N$, set $A_{i,\bar m}^\alpha = \{\epsilon \in I \::\: \M(D_{i,\bar m}^\epsilon) > \frac{1}{\alpha +1}\}$. Observe that
$A_{i,\bar m} = \cup_{\alpha \in \N} A_{i,\bar m}^\alpha$.
Hence, there exists $\alpha \in \N$
such that $\Card(A_{i,\bar m}^\alpha) = +\infty$. As $D_{i,\bar m}^{\epsilon}\cap D_{i,\bar m}^{\epsilon'}=\emptyset$
for all $\epsilon \neq \epsilon' \in I$, we have
\begin{equation*}
1 \ge \M(\cup_{\epsilon \in A_{i,\bar m}^\alpha}D_{i,\bar m}^\epsilon) = \sum_{\epsilon \in A_{i,\bar m}^\alpha} \M(D_{i,\bar m}^\epsilon) \ge \frac{\Card(A_{i,\bar m}^\alpha)}{\alpha+1} = +\infty.
\end{equation*}
This gives a contradiction, and the proof of the claim is complete.\end{proof}

Fix now $\epsilon \in I$ such that
$\M(D_{i,m}^\epsilon) = 0$
for all $1\le m \le m_i$.
Denote the connected components of $\cup_{j=1}^{J_i} \mathcal U_{i,j} \cap B_{i,1}^\epsilon$ by $\mathcal U_{i+1,1}, \dots, \mathcal U_{i+1, l_1}$. By induction, 
for $1<m\le m_i$, define also $\mathcal U_{i+1, l_{m-1}+1}, \dots,\mathcal U_{i+1,l_m}$ as the connected components of
$$\bigcup_{j=1}^{J_i} \mathcal U_{i,j} \cap \left( B_{i,m}^\epsilon \setminus \bigcup_{l=1}^{l_{m-1}} \overline{\mathcal{U}_{i+1,l}} \right).$$
Set $J_{i+1} := l_{m_i}$. 
By definition, the sets $\mathcal U_{i+1,l}$ are open and pairwise disjoint.
We claim that
\[
\bigcup_{l=1}^{J_{i+1}} \mathcal U_{i+1,l} \supset \left(
\bigcup_{m=1}^{m_i} B_{i,m}^\epsilon \setminus \bigcup_{l=1}^{J_{i+1}} \partial \mathcal U_{i+1,l}\right)
\cap \bigcup_{j=1}^{J_i} \mathcal U_{i,j}.
\]

Fix $m\leq m_i$ 
and take $\gamma \in B_{i,m}^\epsilon\setminus
\cup_{1\le l\le J_{i+1}}\partial \mathcal U_{i+1,l}$,
for some $m \le m_i$. We can assume that $\gamma$ does not satisfy this property for all $m'<m$.
By the definition of the sets $\mathcal U_{i+1,l}$,
this implies that $\gamma \notin \mathcal U_{i+1,l}$, for every $l\le l_{m-1}$.
Hence $\gamma \in B_{i,m}^\epsilon \setminus \bigcup_{l=1}^{l_m-1} \overline{\mathcal U_{i+1,l}}$.
This proves the claim.

Remark that $\bigcup_{l=1}^{J_{i+1}} \partial \mathcal U_{i+1,l} \subset \bigcup_{m=1}^{m_i} D_{i,m}^\epsilon$.
As $\mathfrak U_i$ satisfies 
\textbf{(U1)},
the choice of $\epsilon$
yields
\begin{equation*}
\M\left(\bigcup_{l=1}^{J_{i+1}} \mathcal U_{i+1,l} \right) 
\ge \M \left( \bigcup_{m=1}^{m_i} B_{i,m}^\epsilon \setminus \bigcup_{l=1}^{J_{i+1}} \partial \mathcal U_{i+1,l} \right)
\ge \M\left(\bigcup_{m=1}^{m_i} B_{i,m}^\epsilon\right) - \M\left(\bigcup_{m=1}^{m_i} D_{i,m}^\epsilon \right)
 = 1.
\end{equation*}

This proves that $\mathfrak U_{i+1}$ satisfies \textbf{(U1)}. The property \textbf{(U2)} comes from 
the definition of the $B^\epsilon_{i,m}$'s
and the fact that, by construction, 
for every $l$ there exists $m(l)$ such that  
$\mathcal U_{i+1,l} \subset B_{i,m(l)}^\epsilon$.
Finally, the fact that $\mathcal U_{i+1,l}$ is a connected subset of $\cup_{j=1}^{J_i} \mathcal U_{i,j}$ gives \textbf{(U3)}. The proof is complete.
\end{proof}



\medskip

For every $n\in \mathbb N$, define 
$i_n := \max \{ m\leq n \colon n \geq n(m, \mathfrak U_m)\}$, where $n(m, \mathfrak U_m)$ is given by Proposition \ref{p:l414}.
For every  $\alpha >0$, set $m_\alpha := \lfloor \alpha+1 \rfloor$. Then, for any $n \ge \max(m,n(m,\mathcal U_m))$, we have $i_n \ge m_\alpha > \alpha$.
In particular,
we have $i_n\to \infty$ as $n\to \infty$.
 
For every $n\in \mathbb N$, we apply Proposition \ref{p:l414} with ${\mathfrak U}_{i_n}$
instead of $\mathfrak U$.
This gives a collection $\mathcal P_{\phi,n}\subset \cup_{1\leq j \leq J_{i_n}} \mathcal U_{i_n, j}$ of motions of repelling $n$-periodic points, satisfying the properties in that statement.
We define
\begin{equation}\label{eq:M'n}
\mathcal M'_n :=
\sum_{\sigma \in \mathcal P_{\phi,n}}
e^{\psi (\sigma) + \ldots + \psi (\mathcal F^{n-1} \sigma)}\delta_\sigma.
\end{equation}

\begin{lemma}\label{l:limit-1}
Any limit $\mathcal M'$ of the sequence $\{\mathcal M'_n\}_{n \in \mathbb N}$
has mass 1.
\end{lemma}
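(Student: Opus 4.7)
The plan is to show that $\|\mathcal{M}'_n\| \to 1$ and then to deduce that the limit measure $\mathcal{M}'$ has full mass by invoking a uniform compactness of the supports. The key observation is that, by construction in Section \ref{ss:construction} (specifically via Proposition \ref{p:l414-reduced}), every element $\sigma \in \mathcal{P}_{\phi,n}$ lies in $\Supp \mathcal{M}$, which is compact. Hence all the measures $\mathcal{M}'_n$ are supported in a single fixed compact subset of $\mathcal{J}$, so no mass can escape in the weak-$\star$ limit.

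First, I would apply Proposition \ref{p:l414} to the family $\mathfrak{U}_{i_n} = \{\mathcal{U}_{i_n,j}\}_{1\le j \le J_{i_n}}$ used to define $\mathcal{P}_{\phi,n}$. Since $\mathcal{P}_{\phi,n} \subseteq \bigcup_{j} \mathcal{U}_{i_n,j}$ and the $\mathcal{U}_{i_n,j}$ are disjoint, summing the inequalities from Proposition \ref{p:l414} over $1 \le j \le J_{i_n}$ yields
\[
(1-1/i_n)\,\mathcal{M}\Bigl(\bigcup_{j} \mathcal{U}_{i_n,j}\Bigr) \;\le\; \|\mathcal{M}'_n\| \;\le\; (1+1/i_n)\,\mathcal{M}\Bigl(\bigcup_{j} \mathcal{U}_{i_n,j}\Bigr).
\]
By property \textbf{(U1)} of Lemma \ref{l:partition}, $\mathcal{M}(\bigcup_{j}\mathcal{U}_{i_n,j}) = 1$, so $\|\mathcal{M}'_n\| \in [1-1/i_n,\,1+1/i_n]$. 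Since $i_n \to \infty$ (as noted just before \eqref{eq:M'n}), we conclude $\|\mathcal{M}'_n\| \to 1$.

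Finally, to pass from convergence of the total mass of $\mathcal{M}'_n$ to the total mass of the limit $\mathcal{M}'$, I would use the compactness of $\Supp \mathcal{M}$. Since every motion in $\mathcal{P}_{\phi,n}$ lies in $\Supp \mathcal{M}$ (by Lemma \ref{l:motions}), the sequence $\{\mathcal{M}'_n\}$ consists of positive measures all supported in the common compact set $\Supp \mathcal{M}$. Thus, for any weak-$\star$ limit $\mathcal{M}'$, pairing against the constant function $\mathbb{1}$ (which can be realized as a continuous compactly supported cut-off equal to $1$ on $\Supp \mathcal{M}$) gives $\|\mathcal{M}'\| = \lim_n \|\mathcal{M}'_n\| = 1$, as required.

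The step that requires some care is the uniformity of supports: without the fact that $\mathcal{P}_{\phi,n} \subset \Supp \mathcal{M}$ (which follows from the construction in Lemma \ref{l:motions}, itself a consequence of Lemma \ref{l:criterion-conditions}(6) together with the completeness argument of Lemma \ref{l:completeness}), mass could in principle escape at infinity in $\mathcal{J}$ and the limit mass would only be bounded above by $1$. Once this uniform compact containment is in hand, the remainder is purely book-keeping with the inequalities of Proposition \ref{p:l414} and condition \textbf{(U1)}.
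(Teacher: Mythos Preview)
Your proposal is correct and follows essentially the same approach as the paper: sum the two-sided estimate of Proposition \ref{p:l414} over the disjoint family $\mathfrak U_{i_n}$, use \textbf{(U1)} to get $\|\mathcal M'_n\|\in[1-1/i_n,\,1+1/i_n]$, and let $n\to\infty$. Your additional paragraph on the uniform compact containment $\mathcal P_{\phi,n}\subset\Supp\mathcal M$ (to rule out escape of mass in the weak limit) is more explicit than the paper's proof, which leaves this point implicit.
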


\begin{proof}
 By definition, for every $n\in \mathbb N$ we have
\[
\lVert \mathcal M'_n \rVert =
\sum_{\sigma \in \mathcal P_{\phi,n}}
e^{\psi (\sigma) + \ldots +\psi (\mathcal F^{n-1} \sigma)}
= \sum_{j=1}^{J_{i_n}} \sum_{\sigma \in \mathcal P_{\phi,n}\cap \mathcal U_{i_n,j}}
e^{\psi (\sigma) + \ldots +\psi (\mathcal F^{n-1} \sigma)}.
\]
Since the family $\mathfrak U_n$ is pairwise disjoint, Proposition \ref{p:l414}  yields
\[
\left(1-\frac{1}{i_n}\right) \M_\phi\left(\bigcup_{j=1}^{J_{i_n}}\mathcal U_{i_n,j}\right) \le \lVert \M_n' \rVert \le \left(1+\frac{1}{i_n}\right) \M_\phi\left(\bigcup_{j=1}^{J_{i_n}} \mathcal U_{i_n,j}\right).
\]
We conclude by Property \textbf{(U1)}
and letting $n\to \infty$.
\end{proof}


\begin{lemma}\label{l:liminf}
For all $i^\star \in \mathbb N$ and $1\leq j^\star \leq J_{i^\star}$ we have
\[
\liminf_{n\to \infty}
\mathcal M'_n (\mathcal U_{i^\star, j^\star})
\geq \mathcal M (\mathcal U_{i^\star, j^\star}).
\]
\end{lemma}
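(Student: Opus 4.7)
The plan is to exploit the nested structure of the partitions $(\mathfrak U_i)_{i\in \N}$ from Lemma~\ref{l:partition} together with the quantitative lower bound provided by Proposition~\ref{p:l414}. Fix $i^\star\in\N$ and $1\le j^\star \le J_{i^\star}$. Since $i_n\to\infty$, for $n$ large enough we have $i_n\ge i^\star$, so in particular $n\ge n(i_n,\mathfrak U_{i_n})$ and Proposition~\ref{p:l414} applies to each element of $\mathfrak U_{i_n}$. Since the sets $\mathcal{P}_{\phi,n}$ are contained in $\cup_j \mathcal U_{i_n,j}$ and the elements of $\mathfrak U_{i_n}$ are pairwise disjoint, we can split
\[
\mathcal M'_n(\mathcal U_{i^\star,j^\star}) \;\ge\; \sum_{\substack{1\le j\le J_{i_n}\\ \mathcal U_{i_n,j}\subseteq \mathcal U_{i^\star,j^\star}}} \sum_{\sigma \in \mathcal P_{\phi,n}\cap \mathcal U_{i_n,j}} e^{\psi(\sigma)+\dots +\psi(\F^{n-1}\sigma)}.
\]

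The first step is then to apply Proposition~\ref{p:l414} to bound each inner sum from below by $(1-1/i_n)\mathcal M(\mathcal U_{i_n,j})$, and factor out the uniform $(1-1/i_n)$ term. The remaining task is to compare $\sum_{j:\mathcal U_{i_n,j}\subseteq \mathcal U_{i^\star,j^\star}}\mathcal M(\mathcal U_{i_n,j})$ with $\mathcal M(\mathcal U_{i^\star,j^\star})$.

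The key point, and the most delicate step, is to show these two quantities are in fact equal. By iterating property \textbf{(U3)} from level $i_n$ down to level $i^\star$, every $\mathcal U_{i_n,j}$ is contained in some $\mathcal U_{i^\star,j'}$; and by the disjointness of $\mathfrak U_{i^\star}$, if $\mathcal U_{i_n,j}$ meets $\mathcal U_{i^\star,j^\star}$ then necessarily $j'=j^\star$, so the whole $\mathcal U_{i_n,j}$ lies in $\mathcal U_{i^\star,j^\star}$. Consequently
\[
\bigcup_{j:\mathcal U_{i_n,j}\subseteq \mathcal U_{i^\star,j^\star}} \mathcal U_{i_n,j} \;=\; \mathcal U_{i^\star,j^\star} \cap \bigcup_{j=1}^{J_{i_n}}\mathcal U_{i_n,j},
\]
and by \textbf{(U1)} this set has the same $\mathcal M$-measure as $\mathcal U_{i^\star,j^\star}$. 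Combining with disjointness gives
\[
\sum_{j:\mathcal U_{i_n,j}\subseteq \mathcal U_{i^\star,j^\star}} \mathcal M(\mathcal U_{i_n,j}) \;=\; \mathcal M(\mathcal U_{i^\star,j^\star}).
\]

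Putting the two inequalities together yields
\[
\mathcal M'_n(\mathcal U_{i^\star,j^\star}) \;\ge\; (1-1/i_n)\,\mathcal M(\mathcal U_{i^\star,j^\star})
\]
for all sufficiently large $n$. Taking $\liminf$ and using $i_n\to\infty$ gives the desired estimate. The only real obstacle in this argument is the measure-identity step above, which really is a purely combinatorial consequence of \textbf{(U1)} and \textbf{(U3)} together with the disjointness of each $\mathfrak U_i$; everything else is an immediate application of Proposition~\ref{p:l414} and the definition of $i_n$.
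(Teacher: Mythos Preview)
Your argument is correct and follows essentially the same route as the paper's own proof: split $\mathcal M'_n(\mathcal U_{i^\star,j^\star})$ over the finer pieces $\mathcal U_{i_n,j}\subseteq \mathcal U_{i^\star,j^\star}$, apply Proposition~\ref{p:l414} with $m=i_n$ to each piece, and use \textbf{(U1)}, \textbf{(U3)}, and disjointness to identify $\sum_{j}\mathcal M(\mathcal U_{i_n,j})$ with $\mathcal M(\mathcal U_{i^\star,j^\star})$. Your treatment of the measure-identity step via \textbf{(U3)} is slightly more explicit than the paper's, but the logic is the same.
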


\begin{proof}
Fix $i^\star, j^\star$ as in the statement and 
$\epsilon >0$. It is enough to prove that, for all $n$ sufficiently large, we have
\[
\mathcal M'_n (\mathcal U_{i^\star, j^\star})
\geq \mathcal M  (\mathcal U_{i^\star, j^\star})
-\epsilon.
\]
It is enough to consider only those $n$ for which $i_n > i^\star$. For all such $n$, by 
{\bf (U1)} 
and pairwise disjointness,
we have
\[
\mathcal M  (\mathcal U_{i^\star, j^\star})
=
\sum_{j'} \mathcal M (\mathcal U_{i_n, j'})
\quad \mbox{ and}
\quad
\mathcal M'_n  (\mathcal U_{i^\star, j^\star})
\ge 
\sum_{j'} \mathcal M'_n (\mathcal U_{i_n, j'}),
\]
where the two sums are
over the $j'$ such that $U_{i_n,j'}\subset U_{i^\star, j^\star}$.
Using the same convention for the sums,
by the definition of $\mathcal M'_n$ and Proposition \ref{p:l414}, we have 
\[
\mathcal M  (\mathcal U_{i^\star, j^\star}) -
\mathcal M'_n  (\mathcal U_{i^\star, j^\star})
\leq
\sum_{j'}
\mathcal M  (\mathcal U_{i_n, j'}) -
\mathcal M'_n  (\mathcal U_{i_n, j'})
\leq
i_n^{-1}
\sum_{j'}
\M(\mathcal U_{i_n, j'})
=
i_n^{-1} \M(\mathcal U_{i^*,j^*}).
\]
The assertion follows.
\end{proof}

\begin{proof}[End of the proof of Theorem \ref{t:main}]
We show that the sequence $\{\M'_n\}_{n\in \N}$ as in \eqref{eq:M'n}
converges to $\M$.
Let $\mathcal A \subset \J$ be a closed set. Consider $\mathcal A' := \bigcap_{n\in \N} \bigcup_{\mathcal U \in \mathfrak U_{i_n}^{\mathcal A}}\mathcal U$, where $\mathfrak U_i^{\mathcal A} := \{\mathcal U\in \mathfrak U_i \::\:
\mathcal U \cap \mathcal A \neq \emptyset \}$. By \textbf{(U1)}, for every $n\in \mathbb N$ we have
\[
\M\left(
\bigcup_{\mathcal U 
\in \mathfrak U_{i_n}^{\mathcal A}} \mathcal U\right) = 1 - \M\left(\bigcup_{\mathcal U 
\in \mathfrak U_{i_n} \setminus \mathfrak U_{i_n}^{\mathcal A}
}\mathcal U\right) \ge 1 - \M(\mathcal A^c)=
\mathcal M (\mathcal A).
\]
Remark that property \textbf{(U3)} ensures that $\mathcal A'$ is a countable non-increasing intersection.
Letting $n\to \infty$, we deduce that $\M(\mathcal A') \ge \M(\mathcal A)$.

\medskip

\noindent
\textbf{Claim.} We have $\mathcal A'\subseteq \mathcal A$. In particular, we have $\mathcal M(\mathcal A') = \mathcal M( \mathcal A)$.

\begin{proof}
Since $\mathcal M(\mathcal A')\leq\mathcal M(A)$, it is enough to show that $\mathcal A'\subseteq \mathcal A$.
Take $\gamma \in \mathcal A'$.
For any $n \in \mathbb N$, there exist some open set $\mathcal U \in \mathfrak U_{i_n}$ and $\gamma_n \in \mathcal J$ with
$\gamma \in \mathcal U$ and $\gamma_n \in \mathcal U \cap \mathcal A$.
By property \textbf{(U2)}, we have $\dist_{\J}(\gamma,\gamma_n) < 1/{i_n}$. We deduce that $\gamma = \displaystyle \lim_{n\to \infty}\gamma_n \in \overline{\mathcal A} = \mathcal A$.
\end{proof}

Let now $\M'$ be any limit of the sequence $\{\M_n'\}_{n\in \N}$. Lemma \ref{l:liminf} and the Claim above give
that
\[
\M(\mathcal A) = \M(\mathcal A') = \lim_{n\to \infty} \sum_{\mathcal U \in \mathfrak U_{i_n}^{\mathcal A}}\M(\mathcal U)
\le \lim_{n\to \infty} \sum_{\mathcal U \in \mathfrak U_{i_n}^{\mathcal A}}\M'(\mathcal U)
= \M'(\mathcal A') \le \M'(\mathcal A).
\]
As the closed set $\mathcal A$ was chosen arbitrarily,
Lemma \ref{l:limit-1} and the fact that $\|\mathcal M\|=1$
imply that
that $\M = \M'$.
This shows that $\mathcal M'_n\to \mathcal M$ as $n\to \infty$, and completes the proof.
\end{proof}

\bibliographystyle{alpha}

\end{document}